\documentclass[a4paper,10pt]{amsart}
\usepackage{amssymb}
\usepackage{enumerate}
\usepackage[utf8]{inputenc}
\usepackage{xcolor}
\usepackage[hypertexnames=false]{hyperref}
\usepackage{booktabs}
\usepackage{hyperref,srcltx,comment}

\newcommand{\Z}{{\mathbb{Z}}}
\newcommand{\Q}{{\mathbb{Q}}}
\newcommand{\K}{{\mathbb{K}}}
\renewcommand{\L}{{\mathbb{L}}}
\newcommand{\nK}{n_{\K}}
\newcommand{\nL}{n_{\L}}
\newcommand{\nKL}{n_{\K\L}}
\newcommand{\DK}{\Delta_\K}
\newcommand{\DL}{\Delta_\L}
\newcommand{\lDK}{\ln|\DK|}
\newcommand{\lDL}{\ln|\DL|}
\newcommand{\llDK}{\ln\lDK}

\newcommand{\kK}{\kappa_\K}
\newcommand{\kL}{\kappa_\L}
\newcommand{\OK}{\mathcal{O}_\K}
\newcommand{\OL}{\mathcal{O}_\L}
\newcommand{\dd}{\,\mathrm{d}}
\newcommand{\RR}{\mathcal{R}}
\newcommand{\p}{\mathfrak{p}}
\newcommand{\RD}{\mathfrak{D}}
\DeclareMathOperator{\ExpInt}{E_1}
\DeclareMathOperator{\Norm}{N\!}
\DeclareMathOperator{\GL}{GL}
\DeclareMathOperator{\gal}{gal}

\newtheorem{theorem}{Theorem}
\newtheorem{lemma}{Lemma}
\newtheorem*{corollary*}{Corollary}

\theoremstyle{remark}

\newtheorem*{acknowledgements}{Acknowledgements}

\allowdisplaybreaks[4]

\begin{document}
\title[Bounds for the residue and applications]
      {Bounds for the residue of Dedekind zeta functions under GRH and applications}

\author[S.~R.~Garcia]{Stephan Ramon Garcia}
\address[S.~R.~Garcia]{Department of Mathematics and Statistics\\
         Pomona College\\
         610 N. College Ave.\\
         Claremont, CA 91711\\
         USA}
\email{stephan.garcia@pomona.edu}
\urladdr{\url{https://stephangarcia.sites.pomona.edu/}}

\author[L.~Grenié]{Loïc Grenié}
\address[L.~Grenié]{Dipartimento di Ingegneria gestionale, dell'informazione e della produzione\\
         Università di Bergamo\\
         viale Marconi 5\\
         I-24044 Dal\-mi\-ne\\
         Italy}
\email{loic.grenie@gmail.com}

\author[E.~S.~Lee]{Ethan Simpson Lee}
\address[E.~S.~Lee]{University of the West of England\\
         School of Computing and Creative Technologies\\
         Coldharbour Lane\\
         Bristol BS16 1QY\\
         United Kingdom}
\email{ethan.lee@uwe.ac.uk}
\urladdr{\url{https://sites.google.com/view/ethansleemath/home}}

\author[G.~Molteni]{Giuseppe Molteni}
\address[G.~Molteni]{Dipartimento di Matematica\\
         Università degli studi di Milano\\
         via Saldini 50\\
         I-20133 Milano\\
         Italy}
\email{giuseppe.molteni1@unimi.it}
\urladdr{\url{https://sites.unimi.it/molteni/}}

\keywords{Residue of Dedekind zeta functions, Generalized Riemann Hypothesis}
\subjclass[2020]{Primary 11R42, Secondary 11Y40}

\date{\today.
}

\begin{abstract}
Let $\kK$ be the residue at $1$ of the Dedekind zeta function of a number field $\K$. We prove explicit
upper and lower bounds for $\kK$ under the Generalized Riemann Hypothesis (GRH). Previous explicit
bounds of comparable generality are weaker and require a weak form of Artin's holomorphy conjecture:
our bounds improve the constants while assuming only GRH for the relevant Dedekind zeta functions. The
new lower bound enables all imaginary quadratic fields with class number $\leq 2000$ to be detected
(assuming GRH). Furthermore, we demonstrate that the upper bound can be significantly reduced when
considering extensions of any fixed, nontrivial field.
\end{abstract}

\maketitle

\section{Introduction}\label{sec:G1}
Let $\K$ be a number field. Dirichlet's class-number formula reveals a deep connection between the
residue $\kK$ of the Dedekind zeta function $\zeta_\K$ at $1$ and algebraic invariants of the number
field $\K$, such as the degree $\nK$, discriminant $\DK$, regulator $R_{\K}$, class number $h_{\K}$, and
number of real and complex embeddings. Consequently, explicit bounds on $\kK$ are desired, both
conditionally (assuming certain well-known conjectures) and unconditionally.
Results in the first class have the shape
\begin{equation}\label{eq:G1}
\frac{c_-}{\llDK}
\leq \kK
\leq c_+(\llDK)^{\nK-1}
\end{equation}
for suitable functions $c_{\pm}$ that depend weakly on $\nK$ or $\DK$.

A first instance of such a result is due to Littlewood \cite{Littlewood3}, who proved that for any
quadratic field $\K=\Q(\sqrt{\Delta_\K})$ with discriminant $\Delta_\K < 0$ one has
\begin{equation}\label{eq:G2}
c_- = \zeta(2)/(2e^{\gamma}) + o(1)
\qquad\text{and}\qquad
c_+ = 2e^\gamma + o(1)
\end{equation}
as $\Delta_\K\to -\infty$, in which $\zeta = \zeta_{\Q}$ denotes the Riemann zeta function, under the
assumption that the associated Dirichlet $L$-function has no zeros with real part $>1/2$. %
Under the same hypothesis, Littlewood also proved that these bounds are optimal up to a factor $1/2$.

In 2003, as part of a related work, Bessassi \cite[Thm.~10]{Bessassi} extended Littlewood's lower bound
and proved that if $\K$ is normal and $\zeta_\K$ satisfies the Generalized Riemann Hypothesis (GRH), then
the lower bound in \eqref{eq:G1} holds with
\begin{equation*}
c_- = 1/(2e^\gamma) + o(1).
\end{equation*}
In this result the little-$o$ term is not made explicit, but an examination of the proof shows that it is
essentially of order $\ln\ln\ln|\Delta_\K|/\ln\ln|\Delta_\K|$.

More recently, Cho and Kim~\cite{ChoKim} conditionally extended Littlewood's and Bes\-sas\-si's results to
every number field $\K$, proving the claim with
\begin{equation}\label{eq:G3}
c_- = \zeta(\nK)/(2e^{\gamma}) + o(1)
\qquad\text{and}\qquad
c_+ = (2e^\gamma + o(1))^{\nK-1},
\end{equation}
in which the little-$o$ terms are not made explicit but go to $0$ as $|\DK|\to\infty$.
This result generalizes~\eqref{eq:G2} and is proved with the same main ideas. In particular, Cho and Kim
assumed the validity of GRH for the relevant Artin $L$-functions and a strong form of Artin's conjecture
saying that the quotient $\zeta_\K/\zeta$ is an entire function because it may be interpreted as the
$L$-function of a suitable automorphic representation. In the same paper, the authors also proved that
the stronger bounds
\begin{equation}\label{eq:G4}
c_- = \zeta(\nK)/e^{\gamma} + o(1)
\qquad\text{and}\qquad
c_+ = (e^\gamma + o(1))^{\nK-1}
\end{equation}
hold for ``almost every field'' (the exceptions are quantified in a quite technical way), at least when
we restrict to fields of degree $3$, $4$ and $5$ having full symmetric Galois closure.
For degrees $3$ and $4$ the result is unconditional, while for degree $5$ they assume that the quotient
$\zeta_\K/\zeta$ can be realized as an Artin $L$-function coming from an automorphic representation for
$\GL_4$. Under the same hypotheses they also prove that these stronger bounds are attained by suitable
sequences of fields so that no further improvement is possible for all fields of that type.

Totally explicit bounds are also known. For example, the first and third authors \cite{GarciaLee3} and
Palojärvi and Simonič \cite[Cor.~4]{PalSim} proved, respectively, that if GRH holds for $\K$ and $\Q$ and
if $\zeta_\K/\zeta$ is entire, then
\begin{equation}\label{eq:G5}
c_- = e^{- 17.81(n_{\K} - 1)}
\quad\text{and}\quad
c_+ = \big(2e^\gamma e^{\frac{2.475}{\ln\ln{|\Delta_{\K}|}}}\big)^{n_{\K}-1}
\end{equation}
are admissible in \eqref{eq:G1} when $|\Delta_{\K}| \geq 5.4\cdot 10^{6}$. The results in
\cite{GarciaLee3} hold on a much broader range of $|\Delta_{\K}|$. All these results have been refined by
the present authors in \cite{GarciaLee4}, where it is proved under the same assumptions that
\begin{align}
c_-
&= \frac{\zeta(n_{\K})}{2e^\gamma}
   (\ln\ln{|\Delta_{\K}|})^{\frac{-19 (n_{\K} - 1)}{\ln\ln{|\Delta_{\K}|}}}      \label{eq:G6}
\intertext{and}
c_+
&= \Big(2
  e^{\gamma}
  (\ln\ln{|\Delta_{\K}|})^{\frac{19}{\ln\ln{|\Delta_{\K}|}}}\Big)^{n_{\K} - 1}   \label{eq:G7}
\end{align}
are admissible in \eqref{eq:G1} for every field. In this case, both formulas for $c_{\pm}$ agree
with~\eqref{eq:G3} as the discriminant diverges.
\medskip

In this paper we approach the problem from a different point of view. Specifically, the computation of
$\kK$ with a given precision is a key ingredient for the application of Buchmann's algorithm computing the
class group and the regulator of $\K$. For this reason, explicit formulas giving $\kK$ have been devised.
One of them is in Belabas and Friedman~\cite{BelabasFriedman}, and a different formula is in Grenié and
Molteni~\cite[Section~2, Corollary~2.4]{GrenieMolteni2}. Belabas--Friedman's formula is complicated, but
Grenié--Molteni's is simpler and, after further simplifications and a suitable choice of a parameter, it
can be used to deduce bounds for $c_+$ and $c_-$. This is our primary result.
\begin{theorem}\label{th:G1}
Let $\K\neq \Q$ be a number field. Assume GRH for the Dedekind zeta functions of $\K$ and $\Q$.
Then the residue $\kK$ of the Dedekind zeta function $\zeta_{\K}$ of $\K$ at $1$ satisfies
\begin{equation*}
\frac{c_-}{\llDK}
\leq \kK
\leq c_+(\llDK)^{\nK-1}
\end{equation*}
with
\begin{align}
c_- &= \frac{\zeta(\nK)}{2e^{\gamma}}\exp\Bigl(-\frac{3}{\llDK}\Bigr)  \quad \text{and} \label{eq:G8}\\
c_+ &= \big(2e^\gamma\big)^{\nK-1}\exp\Big(\frac{11 - 3\nK}{4\llDK}\Big).               \label{eq:G9}
\end{align}
\end{theorem}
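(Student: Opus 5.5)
We will start from the explicit formula of Grenié--Molteni \cite[Cor.~2.4]{GrenieMolteni2} for $\ln\kK$, which under GRH for $\zeta_\K$ and $\zeta$ expresses $\ln\kK$ as a smoothed sum over prime powers plus an error controlled by a parameter $x>1$. Concretely, we will write it in the form
\begin{equation*}
\ln\kK=\sum_{\p^m:\ \Norm\p^m\le x}\frac{\ln(\Norm\p^m)\,w(\Norm\p^m/x)}{m\Norm\p^m\ln x}-\sum_{p^m\le x}\frac{w(p^m/x)}{m p^m}+E_\K(x),
\end{equation*}
where $w$ is the weight used in that formula. Subtracting the rational contribution is equivalent to working with $\zeta_\K/\zeta$. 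Note that we do \emph{not} need $\zeta_\K/\zeta$ to be entire; the zeros of $\zeta_\K$ and $\zeta$ enter separately.

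Under GRH the term $E_\K(x)$ is bounded by something of the shape
\begin{equation*}
\frac{a\lDK+b\nK+c}{\sqrt{x}\,\ln x}
\end{equation*}
for explicit constants $a,b,c$, obtained by summing $1/|\rho|^2$-type quantities over the zeros of $\zeta_\K$ via the Weil explicit formula. We will then choose $x$ so that this error is $O(1/\llDK)$. The natural choice is $x\asymp(\lDK)^2$, perhaps times a small power of $\llDK$, so that $\ln x\sim 2\llDK$.

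The prime-ideal sum we bound trivially in each direction. Each rational prime $p$ has at most $\nK$ prime ideals above it, and their norms are at least $p$. For the upper bound, the extremal case is that every $p$ splits completely. The $\p$-sum is then at most $\nK$ times the rational sum, so the main term is
\begin{equation*}
(\nK-1)\sum_{p^m\le x}\frac{w(p^m/x)}{mp^m}.
\end{equation*}
For the lower bound, the extremal case is that every $p$ is inert or totally ramified. The $\p$-sum is then at least a sum over norms $p^{\nK}$, which produces the factor $\zeta(\nK)$ asymptotically, and the main term is $-\sum_p w/p+\ln\zeta(\nK)$ up to error.

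Mertens' theorem with explicit GRH remainder (Schoenfeld-type bounds, available since GRH for $\zeta$ is assumed) gives
\begin{equation*}
\sum_{p^m\le x}\frac{w(p^m/x)}{mp^m}=\ln\ln x+\gamma+C_w+O\!\left(\frac{\ln^2x}{\sqrt{x}}\right).
\end{equation*}
Since $\ln\ln x=\ln(2\llDK)+o(1)$, exponentiating yields $\bigl(2e^\gamma\llDK\bigr)^{\nK-1}$ for the upper bound and $\zeta(\nK)/(2e^\gamma\llDK)$ for the lower bound. The weight $w$ is meant to make $C_w$ and its counterpart cancel or be favourable.

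The main obstacle is making the secondary terms fit exactly into the stated corrections $\exp(-3/\llDK)$ and $\exp\bigl((11-3\nK)/(4\llDK)\bigr)$. In particular, the upper bound has a negative $\nK$-dependent coefficient, so the optimisation of $x$ must exploit the $-\sum_{p^m}$ term and the $\nK$-dependence of the Gamma-factor contribution in $E_\K(x)$. First we will fix $x=(\lDK)^2$ exactly, expand all terms to order $1/\llDK$, and identify the constant in front of $\nK/\llDK$. If that is insufficient, we will shift $x$ by a factor $(\llDK)^{\beta}$ and optimise $\beta$.

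Finally, small discriminants, where the asymptotic expansions are not yet accurate, will be handled using $|\DK|\ge$ Minkowski-type lower bounds in terms of $\nK$, together with a finite numerical check of the resulting inequality in the variables $(\nK,\llDK)$.
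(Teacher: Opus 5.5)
Your architecture is the paper's: the Grenié--Molteni formula (Lemma~\ref{lem:G1}), the bound $\tilde{\Lambda}_\K(n)\le\nK\Lambda(n)$ against the weight $f(n)-f(N)-(n-N)f'(N)\ge0$ (nonnegative by convexity of $f$) for the upper bound, comparison with norms $p^{\nK}$ for the lower bound, and $N\asymp(\lDK)^2$. However, the ingredient that produces the shape of $c_+$ is missing, and the expansion you write in its place is wrong. The weight behaves like $f(n)(1-n/N)^2$. So the weighted Mertens sum has constant exactly $\gamma$, with no free $C_w$, and a secondary term of size $1/\ln N$, much larger than your remainder $O(\ln^2x/\sqrt{x})$. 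This is Lemma~\ref{lem:G5}:
\[
\sum_{n\le N}\Lambda(n)\bigl(f(n)-f(N)-(n-N)f'(N)\bigr)
=\ln\ln N+\gamma-\frac{3/2}{\ln N}-\frac{1/2}{\ln^2 N}+\frac{c(N)}{\sqrt{N}\ln N},\qquad |c(N)|\le0.6,
\]
where $-\frac32=\int_0^1\bigl((1-t)^2-1\bigr)\frac{\dd t}{t}$. It is proved by writing the sum as $\int_2^N f''(x)\psi^{(1)}(x)\dd x$, inserting $\psi^{(1)}(x)=x^2/2+O(x^{3/2})$ under RH, integrating the main term exactly, and identifying the constant via Mertens.

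With $N=(\lDK)^2$, the term $-(\nK-1)\frac{3/2}{\ln N}=-\frac{3(\nK-1)}{4\llDK}$ is exactly what, added to the $\frac{5/4}{\llDK}$ coming from $\frac{2.5\lDK}{\sqrt N\ln N}$ in $\RR(N)$, yields $\frac{11-3\nK}{4\llDK}$. The sources you name cannot substitute for it. The Gamma-factor part of the error gives only $-\frac{3.5\nK}{\sqrt N\ln N}=-\frac{1.75\nK}{\lDK\llDK}$. Moving $x$ only trades $(\nK-1)\ln\ln x$ against the $\lDK/(\sqrt{x}\ln x)$ error, and without the secondary term this cannot reach $\frac{11-3\nK}{4\llDK}$ for $\nK=3,4$, even as $|\DK|\to\infty$. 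Note also that any remainder in the Mertens sum is multiplied by $\nK-1$. The paper avoids this by using the one-sided bound $\le\ln\ln N+\gamma-\frac{3/2}{\ln N}$, valid for $N\ge121$.

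Second, your small-discriminant step would fail as described. The error estimates need $N\ge121$. With the paper's choices, $N=(\lDK)^2$ covers only $|\DK|\ge e^{11}\approx6\cdot10^4$ in the upper bound, and $N=(e\lDK)^2$ only $|\DK|\ge58$ in the lower bound. Below these thresholds the analytic inequality is not merely unverified but genuinely weaker than the claim. For the cubic field of discriminant $-23$, twice the rational weighted sum alone already exceeds $3.5$ for every $N\ge121$, while $\ln\bigl(c_+(\llDK)^2\bigr)\approx3.25$. So no numerical check ``in the variables $(\nK,\llDK)$'' can close the argument. You have to compute $\kK$ for every actual field below the thresholds (only degrees $\le6$ occur there), as the paper does with PARI/GP and tables of fields.

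Finally, your lower-bound route, a termwise comparison inside the weighted sum, is a legitimate alternative to the paper's split into Lemmas~\ref{lem:G3} and~\ref{lem:G4}. It does require $\ln n\,\bigl(f(n)-f(N)-(n-N)f'(N)\bigr)$ to be nonincreasing on $[2,N]$; this is true, but you must prove it.
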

Theorem \ref{th:G1} is proved under GRH alone, without any form of Artin's conjecture. %
In contrast with \eqref{eq:G5}, \eqref{eq:G6}, and \eqref{eq:G7}, one sees that the new remainder term 
for $c_-$ no longer depends on $n_{\K}$, that $c_+$ can be simplified to $(2e^\gamma)^{\nK-1}$ for all 
$\nK \geq 4$, and that for quadratic and cubic fields our $c_+$ is also better than \eqref{eq:G5} and 
\eqref{eq:G7}.

In our computations we take care to simplify the conclusion while reducing the size of all diverging
terms. With some extra care one could further reduce the secondary parameters in~\eqref{eq:G8}
and~\eqref{eq:G9}.

Building upon Theorem \ref{th:G1}, we make further progress toward two more problems. First, we apply the
lower bound in Theorem \ref{th:G1} to a generalization of Gauss' class number problem. Second, we
demonstrate how one can refine the upper bound in Theorem \ref{th:G1} by appealing to the subfields of
the number field in question.

\subsection*{An application for the lower bound in Theorem~\ref{th:G1}}
The determination of all imaginary quadratic fields with a given class number is a classical motivation for
the study of lower bounds of residues of their Dedekind zeta function. Gauss' class number problem is the
archetypical example of such computations.  It was solved by Heegner \cite{Heegner}, Stark \cite{Stark67,
Stark69}, and Baker \cite{Baker} in the 1960s.
Then, Oesterlé \cite{Oesterle1,Oesterle2}, Goldfeld \cite{Goldfeld1,Goldfeld2}, and Gross--Zagier
\cite{GrossZagier} paved the way to classify all imaginary quadratic fields with class number up to a
given threshold, at least from a theoretical point of view. By elaborating on their ideas and making
several technical improvements which reduce the required computations, Watkins \cite{Watkins} determined
all such fields with class number $\leq 100$. These results are unconditional, that is, they are proved
without assuming any unproved hypotheses.

The lower bound in \eqref{eq:G8} can be used to prove the following result.
\begin{corollary*}
Let $\K$ be any imaginary quadratic field. Assume GRH for the Dedekind zeta function of $\K$. Then the
class number $h_{\K}$ satisfies the inequality
\begin{equation*}
h_\K \geq \frac{\pi}{12 e^\gamma} \frac{\sqrt{|\DK|}}{\llDK} e^{-\frac{3}{\llDK}}.
\end{equation*}
\end{corollary*}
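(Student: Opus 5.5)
The corollary is a direct consequence of the lower bound in Theorem~\ref{th:G1} combined with Dirichlet's class number formula. For an imaginary quadratic field we have $\nK=2$, $r_1=0$, $r_2=1$, $R_\K=1$, and $w_\K$ roots of unity. The class number formula then reads
\begin{equation*}
\kK=\frac{(2\pi)^{r_2}2^{r_1}h_\K R_\K}{w_\K\sqrt{|\DK|}}=\frac{2\pi h_\K}{w_\K\sqrt{|\DK|}},
\end{equation*}
so that $h_\K = w_\K\sqrt{|\DK|}\,\kK/(2\pi)$.

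Next we insert the lower bound $\kK\geq c_-/\llDK$ with $c_-$ given by \eqref{eq:G8} and $\nK=2$. This gives $c_-=\frac{\zeta(2)}{2e^\gamma}e^{-3/\llDK}=\frac{\pi^2}{12e^\gamma}e^{-3/\llDK}$. Hence
\begin{equation*}
h_\K\geq \frac{w_\K}{2\pi}\cdot\frac{\pi^2}{12e^\gamma}\,\frac{\sqrt{|\DK|}}{\llDK}\,e^{-3/\llDK}
=\frac{w_\K}{2}\cdot\frac{\pi}{12e^\gamma}\,\frac{\sqrt{|\DK|}}{\llDK}\,e^{-3/\llDK}.
\end{equation*}
Since $w_\K\geq 2$ always, this yields the stated inequality.

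One point needs checking. Theorem~\ref{th:G1} assumes GRH for both $\zeta_\K$ and $\zeta$, whereas the corollary only assumes GRH for $\zeta_\K$. This is harmless, because $\zeta$ divides $\zeta_\K$ (indeed $\zeta_\K=\zeta\cdot L(s,\chi_{\DK})$), so every zero of $\zeta$ is a zero of $\zeta_\K$ and GRH for $\zeta_\K$ implies RH. We must also ensure that $\llDK$ is defined and positive, i.e.\ $|\DK|>e$. This holds since $|\DK|\geq 3$, and the range of validity of Theorem~\ref{th:G1} covers all fields $\K\neq\Q$. There is no real obstacle here. The only step requiring care is the bookkeeping of constants, namely $\zeta(2)=\pi^2/6$ and the factor $w_\K/2\geq1$.
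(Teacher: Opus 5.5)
Your proof is correct and is essentially the paper's argument: the class number formula combined with the lower bound \eqref{eq:G8} at $\nK=2$, using $\zeta(2)=\pi^2/6$. The only difference is in how the fields with $w_\K>2$ are handled. The paper uses $\kK\sqrt{|\DK|}=\pi h_\K$, valid only when $w_\K=2$, and disposes of $|\DK|\le 3\,552$ (which includes $\Delta_\K=-3,-4$) by noting that the right-hand side is below $1$ there; your uniform observation $w_\K\ge 2$ covers all fields at once, at the cost of also invoking Theorem~\ref{th:G1} in its numerically verified small-discriminant range.
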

\begin{proof}
The lower bound is less than $1$ for $|\DK|\leq 3\,552$.
%
%
The other imaginary quadratic fields satisfy $\kK \sqrt{|\Delta_{\K}|} = \pi h_{\K}$ and $\zeta(\nK) =
\pi^2/6$, so~\eqref{eq:G8} proves the statement.
\end{proof}
Using the corollary we see that if $h_\K \leq h_0$, then $|\Delta_\K| \leq \Delta_0$ for each pairing
\begin{equation*}
(h_0, \Delta_0) \in\{(1, 3\,552), (100, 30\,864\,538), (2\,000, 12\,338\,970\,045)\} .
\end{equation*}
These bounds can be further improved to
\begin{equation}\label{eq:G10}
(h_0, \Delta_0) \in\{(1, 1\,592), (100, 19\,124\,734), (2\,000, 8\,213\,527\,466)\}
\end{equation}
with a variation of the formula that we will show during the proof of~\eqref{eq:G8}.
%
%
%
%
A similar lower bound is proved in~\cite{LamzouriLiSoundararajan}, Corollary~1.3.
%
Its conclusion is actually stronger than our corollary but it assumes that $|\Delta_\K| \geq 10^{10}$ 
while in our case all values are admitted. Our corollary improves considerably in case we assume from the
beginning that $|\Delta_\K|$ is large, and it becomes better and better as the minimum discriminant
increases. However, it remains (barely) weaker than what is proved in~\cite{LamzouriLiSoundararajan}: the
ad hoc tools in~\cite{LamzouriLiSoundararajan} cannot be beaten by our general tools.

In 2004, Watkins~\cite{Watkins} published a full computation of all complex quadratic fields with class
number up to $100$. In 2006 Jacobson, Ramachandran, and Williams~\cite{JacobsonRamachandranWilliams}
explored the class number of all complex quadratic fields up to $10^{11}$, and in 2016 Mosunov and
Jacobson~\cite{MosunovJacobson} extended this computation to $2^{40}\approx 10^{12}$. In both cases, the
main purpose of the authors was to check the validity of Cohen--Lenstra conjectures. In particular, only
aggregated tables of values are explicitly given, and the raw values of $h_\K$ are not made available.
With the help of our corollary, we have classified the imaginary quadratic fields up to our bound for
$2\,000$, i.e. up to $8\,213\, 527\,466$ (slightly better than what is necessary if we use the bound
in~\cite{LamzouriLiSoundararajan}).
%
%

Table \ref{tab:G1} collects some information about the number of fields and the maximum discriminant in
that range. A full description of all fields with class number up to $2000$ is available at
\url{https://sites.unimi.it/molteni/research/class_numbers/class_numbers.html}.

\begin{table}[ht]
\centering
\begin{tabular}{rrr|rrr}
  \toprule
    \multicolumn{1}{c}{$h_0$} & \# of fields & \multicolumn{1}{c}{Max disc.}
   &\multicolumn{1}{c}{$h_0$} & \# of fields & \multicolumn{1}{c}{Max disc.}       \\
  \midrule
     100 &        42272 &   2383747   &       1100 &       4972445  &  283506787   \\
     200 &       167157 &   8377363   &       1200 &       5941733  &  351036667   \\
     300 &       372393 &  19225243   &       1300 &       6960178  &  398158363   \\
     400 &       665008 &  35723227   &       1400 &       8061896  &  480668203   \\
     500 &      1028749 &  55171603   &       1500 &       9245562  &  562394347   \\
     600 &      1488496 &  81792547   &       1600 &      10553429  &  643541347   \\
     700 &      2014583 & 114148483   &       1700 &      11881764  &  707327707   \\
     800 &      2645480 & 138731107   &       1800 &      13348410  &  805169467   \\
     900 &      3342720 & 199362403   &       1900 &      14824012  &  898772323   \\
    1000 &      4115897 & 227932027   &       2000 &      16453493  & 1062350083   \\
  \bottomrule
\end{tabular}
\medskip
\caption{The number of imaginary quadratic fields $\K$ with $h_\K\leq h_0$ and their maximum discriminant
(in absolute value), for some $h_0 \leq 2\,000$.} %
\label{tab:G1}
\end{table}
%
%

\subsection*{Refinements to the upper bound in Theorem~\ref{th:G1}}
In our proof of Theorem~\ref{th:G1}, the exponent $\nK-1$ of $\llDK$ in the upper bound comes from a
certain procedure that treats all primes in $\Z$ as totally decomposed prime ideals in $\OK$. This is far
from the truth, since the density of this family of primes is only $1/[\K^{\gal}:\Q]$ (equal to $1/\nK$
when $\K/\Q$ is Galois). In less algebraic terms, this procedure essentially estimates $\zeta_{\K}$ with
the $\nK$-th power of $\zeta$, producing a $\nK$-order pole where actually we only have a simple pole.
This step is in some sense a necessity of our procedure, but we {believe} that it greatly overestimates
that exponent, so it is reasonable to expect that the upper bound can have a smaller exponent. On the
other hand, Cho and Kim's result~\eqref{eq:G4} shows that the bounds~\eqref{eq:G1} are essentially the
correct ones when we look for something that is true for all fields. Thus, if a smaller exponent is
possible, it must be for families of fields but not for all of them. The following result shows that this
phenomenon really happens for fields having a fixed and nontrivial subfield.
\begin{theorem}\label{th:G2}
Let $\Q \subseteq \L \subseteq \K$ with $\K \neq \Q$, and let $\nKL$ be the intermediate degree $[\K:\L]$.
Assume GRH for the Dedekind zeta functions of $\Q$, $\L$ and $\K$. Then, we have
\begin{equation*}
\kK \leq \kL^{\nKL} (2e^\gamma \llDK)^{\nKL-1} \exp\Big(\frac{3}{\llDK}\Big).
\end{equation*}
\end{theorem}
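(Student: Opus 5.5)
Our plan is to run the Grenié--Molteni explicit formula for $\kK$ (the one underlying Theorem~\ref{th:G1}) twice: once for $\K$ and once for $\L$, with the same parameter $x$. We then compare the two Euler-product-type main terms prime by prime. Under GRH for $\zeta_\K$ and $\zeta_\L$, that formula expresses $\ln\kK$ as
\[
\ln\kK=\sum_{\p\subset\OK,\ \Norm\p^m\le x}\frac{w(\Norm\p^m/x)}{m\,\Norm\p^m}-(\text{archimedean and zero-sum terms}),
\]
where $w$ is a smooth weight. The error is $O\bigl(\lDK/(\sqrt{x}\ln x)\bigr)$ with explicit constants, and similarly for $\L$ with $\lDL$ in place of $\lDK$. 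The point is that for a rational prime $p$ and a prime $\mathfrak q$ of $\L$ above $p$, the local factor of $\zeta_\K$ at the primes of $\K$ above $\mathfrak q$ satisfies
\[
\prod_{\p\mid\mathfrak q}(1-\Norm\p^{-s})^{-1}\le(1-\Norm\mathfrak q^{-s})^{-\nKL}
\]
for real $s>1$, coefficientwise in the Dirichlet series. This holds since $\sum_{\p\mid\mathfrak q}f_\p\le\nKL$ and $\Norm\p=\Norm\mathfrak q^{f_\p}$. Hence the prime sum for $\K$ is bounded by $\nKL$ times the prime sum for $\L$, up to the weight. This is the relative analogue of the step in Theorem~\ref{th:G1} where every prime is treated as totally split in $\K/\Q$, which produced $\zeta^{\nK}$.

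Concretely, we first write the explicit formula for $\ln\kK$ and bound it by $\nKL$ times the $\L$-prime sum, plus the $\K$-error terms. Next we use the same formula for $\L$ in reverse to replace $\nKL$ times the $\L$-prime sum by $\nKL\ln\kL$, plus $\nKL$ times the $\L$-error terms. The leftover mismatch comes from the archimedean/Gamma contributions and the normalisation. It should be exactly what produces $(2e^\gamma\llDK)^{\nKL-1}$. In Theorem~\ref{th:G1}, with $\L=\Q$, the quantity $\zeta^{\nK}$ truncated at $x$ contributes $\nK\ln(e^\gamma\ln x)$ while $\kappa_\Q=1$, leaving $(\nK-1)\ln(e^\gamma\ln x)$. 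Here the same bookkeeping gives $\nKL\ln\kL+(\nKL-1)\ln(e^\gamma\ln x)$. To see this cleanly we would instead apply the comparison to $\zeta_\L^{\nKL}/\zeta$ versus $\zeta_\K/\zeta$, so that only one factor of the Mertens-type term cancels. Equivalently, we compare $\ln(\kK)-\ln(\kappa_\Q)$ with $\nKL(\ln\kL-\ln\kappa_\Q)+(\nKL-1)\cdot(\text{Mertens sum for }\Q)$, using GRH for $\zeta$ to evaluate the Mertens sum $\sum_{p^m\le x}w/(mp^m)=\ln(e^\gamma\ln x)+O(\ln x/\sqrt x)$ explicitly.

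Finally we choose $x$ as in Theorem~\ref{th:G1}, essentially $x\asymp(\lDK)^2$ times a power of $\llDK$, so that $\ln x\approx2\llDK$. This gives $(2e^\gamma\llDK)^{\nKL-1}$. The $\L$-error terms involve $\lDL\le\lDK/\nKL$, since $|\DK|\ge|\DL|^{\nKL}$, so $\nKL$ times them is controlled by the same quantity that controls the $\K$-error. All errors are then absorbed into $\exp(3/\llDK)$.

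The main obstacle is this last bookkeeping step. We must ensure that the error terms, and the negative contributions from the Gamma factors, when multiplied by $\nKL$ and summed, stay below $3/\llDK$ uniformly in $\nKL$ and $\nL$. We would first try to reuse verbatim the explicit inequalities from the proof of~\eqref{eq:G9}. If they are not uniform in $\nKL$, we would try to exploit the negative signs of the archimedean terms, which grow with $\nK$, as was done there to obtain the factor $\exp((11-3\nK)/(4\llDK))$.
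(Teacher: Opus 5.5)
Your architecture is the paper's. You write Lemma~\ref{lem:G1} for $\K$ and for $\L$ with the same $N$, use $\tilde{\Lambda}_\K\le\nKL\tilde{\Lambda}_\L$ against the nonnegative weight $f(n)-f(N)-(n-N)f'(N)$, use $|\DL|^{\nKL}\le|\DK|$, and take $N$ of size $(\lDK)^2$. Your final bookkeeping, $\nKL\tilde{\Lambda}_\L-\Lambda=\nKL(\tilde{\Lambda}_\L-\Lambda)+(\nKL-1)\Lambda$, is a legitimate variant. From Lemmas~\ref{lem:G1}, \ref{lem:G2} and~\ref{lem:G5} alone it gives
\[
\ln\kK\le \nKL\ln\kL+(\nKL-1)\Big(\ln\ln N+\gamma-\frac{3/2}{\ln N}\Big)+|\RR_\K(N)|+\nKL\,|\RR_\L(N)| .
\]
The paper instead evaluates the $\L$-sum directly in Lemma~\ref{lem:G6}, via $\psi^{(1)}_\L$ and Mertens' theorem for $\L$. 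The two routes have the same error structure, and yours has slightly better constants. Your first guess that $(2e^\gamma\llDK)^{\nKL-1}$ comes from archimedean terms is wrong; it comes from the $\nKL-1$ copies of the rational Mertens sum, as you then realise.

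The gap is that you stop exactly where the theorem's content lies: the constant $3$, uniform in $\nKL$ and $\nL$. Some of the concrete choices you indicate would not deliver it.

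(i) $N$ cannot contain a positive power $(\llDK)^k$. Then $(\nKL-1)\ln\ln N$ exceeds $(\nKL-1)\ln(2\llDK)$ by about $(\nKL-1)k\ln\llDK/(2\llDK)$, which is not $O(1/\llDK)$, and this is also not "as in Theorem~\ref{th:G1}". You need $N=(c\lDK)^2$ with a constant $c\ge1$ chosen according to $\nKL$. The cost $(\nKL-1)\ln c/\llDK$ is paid by the secondary term $-(\nKL-1)\frac{3/2}{\ln N}\approx-(\nKL-1)\frac{3}{4\llDK}$ of the smoothed Mertens sum, so $\ln c$ must be small when $\nKL$ is large. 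On the other hand, $\frac{5\lDK}{\sqrt N\ln N}\le\frac{5}{2c\llDK}$ forces $c$ to be large when $\nKL=2$. The paper takes $c=5$, $1.3$, $1$ for $\nKL=2$, $3$, $\ge4$.

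(ii) The rational Mertens sum must be used in the smoothed form of Lemma~\ref{lem:G5}: error $O(1/(\sqrt N\ln N))$ plus the explicit $-\frac{3/2}{\ln N}$. The error $O(\ln N/\sqrt N)$ you state would not do. Multiplied by $\nKL-1$, which can be of size $\lDK$ (fields of bounded root discriminant over a fixed quadratic $\L$), it is of order $\llDK$.

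(iii) The terms $23.5\,\nKL$ coming from $\nKL|\RR_\L|$ are only partly offset by the $-3.5\nK$ terms of Lemma~\ref{lem:G2}. Even that needs $\nK=\nL\nKL\ge2\nKL$, so you must first set aside $\L=\Q$ (Theorem~\ref{th:G1}) and $\L=\K$ (trivial). What remains is of size $(\nKL-1)/(\sqrt N\ln N)$ and is again absorbed by $-(\nKL-1)\frac{3/2}{\ln N}$. So the decisive saving is this Mertens secondary term, not the "negative archimedean terms" you propose to exploit.

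(iv) The analytic argument needs $N\ge121$ (the paper uses $N\ge190$), which fails for small $|\DK|$. The paper closes these cases with Odlyzko's discriminant bounds and a direct PARI/GP check of the sextic fields with $|\DK|\le 40\,260$ that have a quadratic subfield.

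Without (i)--(iv), the argument yields at best an unspecified correction factor in place of $\exp(3/\llDK)$. With the choices you actually propose, that factor is not even uniform in $\nKL$.
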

Note that when $\L=\Q$ the result is weaker than Theorem~\ref{th:G1} and that when $\L=\K$ the
statement is a tautology, so the real interest of this result is for the case $\Q \subsetneq \L
\subsetneq \K$.

As announced, Theorem \ref{th:G2} shows that when $\K$ ranges only among the family of fields containing
a fixed field $\L$, the order of divergence of $\kK$ in terms of $\llDK$ is $\nKL-1$ at most, and hence
much smaller than $\nK-1$.
\smallskip

Finally, we notice that combining Theorems~\ref{th:G1} and~\ref{th:G2} we get the following conclusion
for towers of fields.
\begin{corollary*}
Let $\Q= \K_0\subsetneq \K_1\subsetneq \K_2\subsetneq \ldots \subsetneq \K_\ell$ be any tower of distinct
number fields. To simplify notation let $n_j=n_{\K_j}=[\K_j:\Q]$, $\kappa_j=\kappa_{\K_j}$, and $\Delta_j =
\Delta_{\K_j}$ for $j=0,1,\ldots,\ell$. Assume GRH for all involved fields. Then
\[
\kappa_\ell
\leq (2e^\gamma)^{n_\ell-1}
     \prod_{j=1}^{\ell} (\ln\ln|\Delta_j|)^{\frac{n_{\ell}}{n_{j-1}} - \frac{n_\ell}{n_j}}
     \exp\bigg(\frac{11 n_\ell/n_1 - 3n_\ell}{4\ln\ln|\Delta_1|}
              + \sum_{j=2}^\ell\frac{3n_\ell/n_j}{\ln\ln|\Delta_j|}
         \bigg).
\]
In particular, for $n_1 \geq 8$ we get
\[
\kappa_\ell
\leq (2e^\gamma)^{n_\ell-1}
     \prod_{j=1}^{\ell} (\ln\ln|\Delta_j|)^{\frac{n_{\ell}}{n_{j-1}} - \frac{n_{\ell}}{n_j}}
\]
for every $\ell$ (and $n_1\geq 4$ suffices for $\ell=1$, $n_1\geq 6$ for $\ell=2$, and $n_1\geq 7$ for
$\ell=3$).
\end{corollary*}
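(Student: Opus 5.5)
The plan is to argue by induction along the tower. Theorem~\ref{th:G1} handles the first step and Theorem~\ref{th:G2} each later step; all that remains is bookkeeping of exponents. For $j=1$, Theorem~\ref{th:G1} applied to $\K_1\neq\Q$ gives
\[
\kappa_1\leq (2e^\gamma)^{n_1-1}(\ln\ln|\Delta_1|)^{n_1-1}\exp\Big(\frac{11-3n_1}{4\ln\ln|\Delta_1|}\Big),
\]
which is exactly the claimed bound for $\ell=1$, since $n_0=1$. For $j\geq 2$, Theorem~\ref{th:G2} with $\L=\K_{j-1}$ and $\K=\K_j$ gives
\[
\kappa_j\leq \kappa_{j-1}^{n_j/n_{j-1}}\,(2e^\gamma\ln\ln|\Delta_j|)^{n_j/n_{j-1}-1}\exp\Big(\frac{3}{\ln\ln|\Delta_j|}\Big).
\]
All quantities are positive, so raising the inductive bound for $\kappa_{j-1}$ to the power $n_j/n_{j-1}$ preserves the inequality. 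Here $\ln\ln|\Delta_j|>0$ because $|\Delta_j|\geq 3$ for $\K_j\neq\Q$.

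The key step is checking that the exponents reproduce the stated formula with $\ell$ replaced by $j$.
\begin{itemize}
\item The power of $2e^\gamma$ becomes $(n_{j-1}-1)\frac{n_j}{n_{j-1}}+\frac{n_j}{n_{j-1}}-1=n_j-1$.
\item For $i\leq j-1$, the exponent of $\ln\ln|\Delta_i|$ becomes $\bigl(\frac{n_{j-1}}{n_{i-1}}-\frac{n_{j-1}}{n_i}\bigr)\frac{n_j}{n_{j-1}}=\frac{n_j}{n_{i-1}}-\frac{n_j}{n_i}$.
\item The new factor $\ln\ln|\Delta_j|$ carries the exponent $\frac{n_j}{n_{j-1}}-1=\frac{n_j}{n_{j-1}}-\frac{n_j}{n_j}$.
\item In the exponential, each old term is multiplied by $n_j/n_{j-1}$: $\frac{11n_{j-1}/n_1-3n_{j-1}}{4}$ turns into $\frac{11n_j/n_1-3n_j}{4}$, and $3n_{j-1}/n_i$ turns into $3n_j/n_i$.
\item The new term $3/\ln\ln|\Delta_j|$ equals $\frac{3n_j/n_j}{\ln\ln|\Delta_j|}$.
\end{itemize}
Taking $j=\ell$ proves the first inequality.

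For the simplified form, it suffices to show that the argument of the exponential is $\leq 0$. The only negative contribution is $-3n_\ell/(4\ln\ln|\Delta_1|)$, so I compare all denominators with $\ln\ln|\Delta_1|$. The tower formula $|\Delta_j|=|\Delta_{j-1}|^{[\K_j:\K_{j-1}]}\Norm(\RD_{\K_j/\K_{j-1}})$ with norm $\geq 1$ gives $|\Delta_j|\geq|\Delta_{j-1}|\geq\dots\geq|\Delta_1|$. Since the terms with $j\geq 2$ are positive, $\ln\ln|\Delta_j|\geq\ln\ln|\Delta_1|$ bounds each of them above by the same numerator over $\ln\ln|\Delta_1|$. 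The whole exponent is then at most
\[
\frac{n_\ell}{4\ln\ln|\Delta_1|}\Big(\frac{11}{n_1}-3+12\sum_{j=2}^\ell\frac{1}{n_j}\Big).
\]
Each inclusion in the tower is strict, so $n_j\geq 2^{j-1}n_1$, and the sum is at most $\frac{1}{n_1}\bigl(\frac12+\frac14+\cdots\bigr)$.
\begin{itemize}
\item For general $\ell$ the bracket is at most $\frac{23}{n_1}-3$, which is $\leq 0$ when $n_1\geq 8$.
\item For $\ell=1$ it is $\frac{11}{n_1}-3$, so $n_1\geq 4$ suffices.
\item For $\ell=2$ it is at most $\frac{17}{n_1}-3$, so $n_1\geq 6$ suffices.
\item For $\ell=3$ it is at most $\frac{20}{n_1}-3$, so $n_1\geq 7$ suffices.
\end{itemize}

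The only delicate points are keeping the exponent bookkeeping exact and justifying the monotonicity $|\Delta_j|\geq|\Delta_1|$, which is needed to put every term over the common denominator $\ln\ln|\Delta_1|$. Neither is a real obstacle.
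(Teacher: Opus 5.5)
Your proposal is correct and follows essentially the same route as the paper. The paper also uses induction along the tower, with Theorem~\ref{th:G1} as the base case and Theorem~\ref{th:G2} applied to $\K_{\ell-1}\subsetneq\K_\ell$ for each step, and then uses $|\Delta_j|\geq|\Delta_1|$ together with $n_1/n_j\leq 2^{1-j}$ to make the exponent nonpositive. Your explicit exponent bookkeeping, including the positivity of $\ln\ln|\Delta_j|$, spells out details the paper leaves implicit.
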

%
The sum $\sum_{j=1}^{\ell}\big(\frac{n_{\ell}}{n_{j-1}} - \frac{n_{\ell}}{n_j}\big)$ is $n_\ell - 1 =
n_{\K_\ell}-1$, so the corollary shows how the exponent $\nK-1$ of $\llDK$ in Theorem \ref{th:G1} is now
distributed among the analogous exponents of intermediate fields.
\begin{proof}
The proof is by induction on $\ell$, the length of the tower of fields. The claim for $\ell=1$ holds
since it coincides with what we know from Theorem~\ref{th:G1}. 

Next, let $\ell\geq 2$ and assume that the claim
holds for $\ell-1$. Then the formula for the tower $\Q\subsetneq \K_1\subsetneq \K_2\subsetneq \ldots
\subsetneq \K_{\ell-1}$ produces an upper bound for $\kappa_{\ell-1}$ that produces the desired bound for
$\kappa_\ell$ via the extension
$\Q\subsetneq \K_{\ell-1}\subsetneq \K_{\ell}$ and Theorem~\ref{th:G2}. 
By the relative discriminant formula, $|\Delta_j| = |\Delta_1|^{n_j/n_1} \Norm_{\K_1/\Q}
(\RD_{\K_j/\K_1})$, where $\RD_{\K_j/\K_1}$ denotes the relative discriminant ideal of $\K_j/\K_1$. It
follows that $|\Delta_j|\geq |\Delta_1|$ for every $j$. As a consequence, the extra exponential term is
bounded by
\[
\frac{11\frac{n_{\ell}}{n_1}-3n_{\ell} + 12\sum_{j=2}^{\ell}n_{\ell}/n_j}{4\ln\ln|\Delta_1|},
\]
which is negative when $3n_1 > 11 + 12\sum_{j=2}^{\ell}n_1/n_j$. We are assuming that the fields are
distinct, hence $n_1/n_j \leq 1/2^{j-1}$, so the exponent is negative when $3 n_1 > 11 + 12$, which holds 
when $n_1 \geq 8$. For $\ell=1$ (respectively, $2$, $3$) the sum is $0$ (respectively, at most $1/2$, 
$3/4$) and the special cases follow.
\end{proof}
\begin{acknowledgements}
SRG acknowledges the support of NSF Grant DMS-2452084. LG and GM are members of the INdAM group GNSAGA.
Computations were performed using \texttt{PARI/GP}~\cite{PARI2} using the PlaFRIM experimental
testbed, supported by Inria, CNRS (LABRI and IMB), Université de Bordeaux, Bordeaux INP and Conseil
Régional d’Aquitaine (see \url{https://www.plafrim.fr}). The authors thank Karim Belabas for his
assistance for these computations.
\end{acknowledgements}

\section{Preparatory Lemmas}\label{sec:G2}
In what follows, $\OK$ denotes the ring of integers of a number field $\K$, $\p$ a nonzero prime ideal of
$\OK$, and $p$ a (rational) prime number. The von Mangoldt function $\Lambda_\K$ is defined on the set of
ideals of $\OK$ by $\Lambda_\K(\mathfrak{I}) = \ln\Norm\p$ if $\mathfrak{I}=\p^m$ for some $\p$ and
$m\geq 1$, and it is zero otherwise. Let
\begin{equation*}
\tilde{\Lambda}_\K(n)
 := \sum_{\substack{\mathfrak{I}\subset \OK\\ \Norm\mathfrak{I} = n}} \Lambda_\K(\mathfrak{I}).
\end{equation*}
As usual in analytic number theory, we suppress the subscript for functions related to
the rational field, so that $\zeta=\zeta_\Q$ is the Riemann zeta function and $\Lambda=\tilde{\Lambda}_\Q$
is the classical von Mangoldt function.

Before we can proceed to the proof of the theorems, we require six preparatory lemmas. The first lemma
is~\cite[Corollary~2.4]{GrenieMolteni2}.
\begin{lemma}\label{lem:G1}
Assume that $\zeta_{\K}$ and $\zeta$ satisfy GRH. Let $f(x)=(x\ln x)^{-1}$, $y=1/\ln N$,
$\ExpInt(x)= \int_1^{+\infty} e^{-xt}t^{-1}\dd t$,
\begin{align*}
 \alpha_\K &= 0.5375 \lDK - 1.0355 \nK + 5.4341,  &&& \beta_\K  &= \nK - 1,              \\
 \gamma_\K &= 1.0155 \lDK - 2.1041 \nK + 10.1799, &&& \delta_\K &= \lDK - 1.415  \nK + 4 .
\end{align*}
For $N\geq 3$, we have
\begin{equation*}
\ln \kK
= \sum_{n\leq N} \big(\tilde{\Lambda}_\K(n)-\Lambda(n)\big) \big(f(n) - f(N) - (n-N)f'(N)\big) + \RR(N) ,
\end{equation*}
in which
\begin{align*}
 |\RR(N)|
  &\leq   \alpha_\K \bigg(\frac{\frac{5}{2}+y}{\sqrt{N}\ln N} + \frac{3}{4} \ExpInt\Big(\frac{\ln N}{2}\Big)\bigg)
        + \beta_\K  \frac{2+3y}{N}
        + \gamma_\K \frac{2y+y^2}{N}
        + \delta_\K \frac{y+y^2}{N^2} .
\end{align*}
\end{lemma}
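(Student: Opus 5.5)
The plan is the explicit-formula method: first express $\ln\kK$ as a smoothed prime sum plus a sum over zeros, then bound the zero contribution using GRH. Throughout I work with the quotient $F(s)=\zeta_\K(s)/\zeta(s)$. Both functions have a simple pole at $s=1$, so $F$ is holomorphic and nonzero near $s=1$, with $F(1)=\kK$. On $\Re s>1$,
\[
-\frac{F'}{F}(s)=\sum_{n\ge1}\bigl(\tilde{\Lambda}_\K(n)-\Lambda(n)\bigr)n^{-s}.
\]
Formally, integrating from $1$ to $+\infty$ gives
\[
\ln\kK=\sum_n\bigl(\tilde{\Lambda}_\K(n)-\Lambda(n)\bigr)f(n),
\qquad f(n)=\frac{1}{n\ln n}.
\]
This series does not converge usefully, so I would truncate it smoothly. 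The kernel $g_N(x)=f(x)-f(N)-(x-N)f'(N)$ for $1<x\le N$, and $g_N(x)=0$ for $x>N$, is $C^1$ at $x=N$. Hence its Mellin transform
\[
G_N(w)=\int_1^N g_N(x)x^{w-1}\dd x
\]
decays like $|w|^{-2}$ on vertical lines. This decay makes the contour shift below legitimate.

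The first step is to write the truncated sum as
\[
\frac{1}{2\pi i}\int_{(c)}-\frac{F'}{F}(s+1)\,\widehat{g}_N(s)\dd s
\]
for a suitable transform $\widehat g_N$ of $g_N$ chosen so that the pole of the weight at $s=0$ produces exactly $\ln F(1)=\ln\kK$. This is the natural origin of the weight $f$: in the transformed kernel, $1/\ln x$ arises as $\int_0^\infty x^{-s}\dd s$. I would then move the contour to the left. The residues that appear are:
\begin{enumerate}[(i)]
\item the main term $\ln\kK$;
\item a sum over the nontrivial zeros $\rho$ of $\zeta_\K$, minus the corresponding sum for $\zeta$;
\item contributions from the trivial zeros at $s=0,-1,-2,\dots$, whose multiplicities are $r_1+r_2-1$ and $r_2$ for $\zeta_\K$, adjusted by subtracting those of $\zeta$;
\item a term from the factor $|\DK|^{s/2}$ and the $\Gamma$-factors, coming from the functional equation.
\end{enumerate}
Everything except (i) goes into $\RR(N)$.

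The second step bounds the zero sum under GRH. For $\rho=\tfrac12+i\tau$, the transform evaluated at $\rho-1$ is of size $N^{-1/2}(\ln N)^{-1}|\rho|^{-2}$, with the secondary terms in $y=1/\ln N$ and $\ExpInt(\tfrac12\ln N)$ coming from exact integration of $g_N$ against $x^{\rho-1}$. I would then use the standard identity
\[
\sum_\rho\frac{1}{|\rho|^2}\approx\lDK-\nK(\gamma+\ln 8\pi)+\dots,
\]
which follows from Hadamard's product and the value of $\Re\frac{\zeta_\K'}{\zeta_\K}$ at $s=\tfrac12$ or at $s=1$. Combined with the analogous constant for $\zeta$, this yields the coefficient $\alpha_\K$, which is linear in $\lDK$ and $\nK$. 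The third step handles the trivial zeros and the archimedean terms. Their transforms are of order $N^{-1}$ and $N^{-2}$ in the first two layers, giving the terms with $\beta_\K=\nK-1$, $\gamma_\K$ and $\delta_\K$. Here $\gamma_\K$ and $\delta_\K$ absorb the $\lDK$ pieces that appear when the logarithmic derivative is evaluated at $s=0,-1$.

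I expect the main obstacle to be the explicit numerics: obtaining the specific constants $0.5375$, $1.0355$, $5.4341$ and so on. This requires evaluating $\Re\frac{\Gamma'}{\Gamma}$ at the relevant points, bounding $\sum_\rho 1/|\rho|^2$ uniformly in $r_1$ and $r_2$ (taking the worst case over signatures, which produces the $\nK$ coefficients), and checking that the tail integrals along the shifted contour vanish or are absorbed for every $N\ge3$. The structural argument is routine by comparison. I would first derive the exact formula for the zero contribution and only then optimize the constants.
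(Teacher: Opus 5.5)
\textbf{Verdict: there is a genuine gap.} The general strategy (explicit formula, zeros on the critical line, trivial zeros giving the $N^{-1}$ and $N^{-2}$ terms) is in the right spirit, but the step that is supposed to produce $\ln\kK$ cannot work.

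\emph{The residue cannot produce $\ln\kK$.} If $\widehat g_N$ is meromorphic, the residue of $-\frac{F'}{F}(s+1)\widehat g_N(s)$ at $s=0$ is a linear combination of $\frac{F'}{F}(1)$ and its derivatives, because $F'/F$ is holomorphic at $1$. Replacing $F$ by $cF$ leaves all of these unchanged but shifts $\ln F(1)$ by $\ln c$. So no residue of this kind can equal $\ln\kK$. The value is fixed only by the normalization $F(\sigma)\to 1$ as $\sigma\to+\infty$, i.e.\ by integrating $F'/F$ along $[1,+\infty)$. Your remark $1/\ln x=\int_0^\infty x^{-s}\dd s$ points the right way, but it yields a logarithmic branch point, not a pole.

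\emph{The transform $G_N$ is not defined.} $G_N(w)=\int_1^N g_N(x)x^{w-1}\dd x$ diverges, since $g_N(x)\sim 1/\ln x$ is not integrable at $x=1$. If you cut at $x=2$, $G_N$ becomes entire and nothing is picked up at $w=1$.

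\emph{The zero sum is the wrong one.} $\sum_\rho|\rho|^{-2}$ is not an identity in $\lDK,\nK$. It contains twice the constant term of $\zeta_\K'/\zeta_\K$ at $s=1$, which must be bounded separately, e.g.\ at real $\sigma>1$ where $-\frac{\zeta_\K'}{\zeta_\K}(\sigma)\le -\nK\frac{\zeta'}{\zeta}(\sigma)$. Its $\lDK$-coefficient is also $1$, so it cannot give the $0.5375$ in $\alpha_\K$.

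\textbf{How the lemma is actually obtained.} The paper quotes it from \cite[Corollary~2.4]{GrenieMolteni2}; the same mechanism appears in the proofs of Lemmas~\ref{lem:G5} and~\ref{lem:G6}. One inserts $\ln\kK$ first and does contour analysis only for a summatory function.

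Put $a_n=\tilde\Lambda_\K(n)-\Lambda(n)$ and $D=\psi^{(1)}_\K-\psi^{(1)}$.
\begin{itemize}
\item Since $f(n)-f(N)-(n-N)f'(N)=\int_n^N f''(x)(x-n)\dd x$, the truncated sum equals $\int_2^N f''(x)D(x)\dd x$.
\item The same double partial summation with $f_\sigma(x)=x^{-\sigma}/\ln x$ gives $\ln F(\sigma)=\int_2^\infty f_\sigma''D$ for $\sigma>1$.
\item Letting $\sigma\to1^+$ by dominated convergence, using $D(x)=O(x^{3/2})$ under GRH, gives $\ln\kK=\int_2^\infty f''D$.
\item Hence $\RR(N)=\int_N^\infty f''(x)D(x)\dd x$ exactly.
\end{itemize}

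Your residue bookkeeping belongs in the explicit formula for $\psi^{(1)}$, with kernel $x^{s+1}/(s(s+1))$. Under GRH it gives a bound $|D(x)|\le\alpha_\K x^{3/2}+\beta_\K x\ln x+\gamma_\K x+\delta_\K$ for $x\ge3$ (compare the bound quoted in the proof of Lemma~\ref{lem:G4}). The $x^{3/2}$-coefficient bounds $\sum_\rho|\rho(\rho+1)|^{-1}$ over the zeros of $\zeta_\K$ and $\zeta$. Its asymptotic $\lDK$-coefficient is $\frac1\pi\int_0^\infty((\frac14+\tau^2)(\frac94+\tau^2))^{-1/2}\dd\tau\approx 0.537$.

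Since $f''>0$, the four terms of the lemma are the following integrals, each obtained by one or two integrations by parts:
\begin{gather*}
\int_N^\infty f''(x)x^{3/2}\dd x=\frac{5/2+y}{\sqrt N\ln N}+\frac34\int_N^\infty\frac{\dd x}{x^{3/2}\ln x}=\frac{5/2+y}{\sqrt N\ln N}+\frac34\ExpInt\Bigl(\frac{\ln N}{2}\Bigr),\\
\int_N^\infty f''(x)\,x\ln x\dd x=\frac{2+2y}{N}+\ExpInt(\ln N)\le\frac{2+3y}{N},\\
\int_N^\infty f''(x)\,x\dd x=\frac{2y+y^2}{N},\qquad \int_N^\infty f''(x)\dd x=\frac{y+y^2}{N^2}.
\end{gather*}
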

\begin{lemma}\label{lem:G2}
Assume GRH as in the previous lemma. Then for $N\geq 121$, we have
\begin{equation*}
|\RR(N)|\leq \frac{2.5 \lDK}{\sqrt{N}\ln N} - \frac{3.5 \nK}{\sqrt{N}\ln N} + \frac{23.5}{\sqrt{N}\ln N}.
\end{equation*}
\end{lemma}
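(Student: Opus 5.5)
We start from the bound of Lemma~\ref{lem:G1} and bound each of its four terms by a multiple of $1/(\sqrt{N}\ln N)$, using only $N\geq 121$, i.e.\ $\ln N\geq \ln 121\approx 4.796$ and $y=1/\ln N\leq 0.2086$. Then we collect the coefficients of $\lDK$, $\nK$ and the constant term. The coefficient of $\lDK$ comes only from $\alpha_\K,\gamma_\K,\delta_\K$. The coefficient of $\nK$ comes from all four, with negative contributions from $\alpha_\K,\gamma_\K,\delta_\K$ and a positive one from $\beta_\K$.

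First we handle the $\alpha_\K$ term. The factor $\frac{5}{2}+y$ is at most $2.5+0.2086$ over $\sqrt{N}\ln N$. For the exponential integral we use the standard bound $\ExpInt(x)\leq e^{-x}/x$, which gives
\[
\ExpInt\Big(\frac{\ln N}{2}\Big)\leq \frac{2}{\sqrt{N}\ln N}.
\]
So the $\alpha_\K$ bracket is at most $(2.5+y+1.5)/(\sqrt{N}\ln N)\leq 4.21/(\sqrt N\ln N)$. This is too crude, because $0.5375\cdot 4.21\approx 2.26$ leaves too little room for the other $\lDK$ contributions. We therefore refine with $\ExpInt(x)\leq e^{-x}/(x+1)$ or similar, or use $e^{-x}\ln(1+1/x)$-type bounds. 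With $x=\ln N/2\geq 2.398$, the ratio to $e^{-x}/x$ is about $x/(x+1)\approx 0.706$, so the bracket becomes about $2.5+0.21+1.06\approx 3.77$, and its $\lDK$ coefficient is about $2.03$.

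Next we handle the $\beta,\gamma,\delta$ terms, which decay like $1/N$ and $1/N^2$. We write each as a multiple of $1/(\sqrt{N}\ln N)$, using that $\ln N/\sqrt{N}$ and $\ln N/N^{3/2}$ are decreasing for $N\geq 121$. Then
\[
\frac{2y+y^2}{N}=\frac{(2+y)}{\sqrt N\ln N}\cdot\frac{1}{\sqrt N}\leq\frac{2.21/11}{\sqrt N\ln N}\approx \frac{0.20}{\sqrt N\ln N},
\]
and the $\gamma_\K$ term contributes at most $1.0155\cdot 0.20\approx 0.21$ to the $\lDK$ coefficient. The $\delta_\K$ term is negligible, about $(1+y)/N^{3/2}$, roughly $10^{-3}$. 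The $\lDK$ total is then about $2.24\leq 2.5$.

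For the remaining coefficients there is one care point. The $\nK$ coefficients of $\alpha,\gamma,\delta$ are negative, and their magnitude is largest at $N=121$, but we need an upper bound for $|\RR(N)|$. So we should not use the worst-case positive coefficient but keep the exact $N$-dependence. The clean way is to write
\[
|\RR(N)|\sqrt N\ln N\leq A(N)\lDK+B(N)\nK+C(N).
\]
Since $\lDK\geq c\,\nK$ by Minkowski, with $\lDK\geq \ln 3$ and roughly $\lDK\geq 0.69(\nK)$ for $\nK\geq 2$, we can trade excess $\lDK$ slack ($2.5-A(N)$) against $\nK$ terms. Alternatively we verify directly that $A(N)\leq 2.5$, $B(N)\leq -3.5$ and $C(N)\leq 23.5$.

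The main obstacle is the $\nK$ coefficient. The $\alpha$ term contributes about $-1.0355\cdot 3.77\approx -3.9$ at $N=121$, but as $N\to\infty$ this tends to $-1.0355\cdot 4=-4.14$. Meanwhile $\beta_\K$ adds $+(2+3y)\ln N/\sqrt N$, which is at most about $1.17$ at $N=121$ (and decreasing), while $-3.5$ is demanded. I would first check monotonicity in $N$ of each coefficient function and evaluate at $N=121$. If $B(N)\leq -3.5$ fails there, I would use the slack $2.5-A(N)\approx 0.26$ times $\lDK$ together with Minkowski's bound $\lDK\geq \nK\ln(\pi^2/4)$-type inequalities, absorbing the constant excess into the $23.5$. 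The constant $C(N)\leq 5.4341\cdot 3.77+\ldots$ is comfortably below $23.5$ and leaves room for such absorption.
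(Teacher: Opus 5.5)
Your plan (reduce every term of Lemma~\ref{lem:G1} to $(\cdot)/(\sqrt N\ln N)$, collect the coefficients of $\lDK$, $\nK$ and $1$, check monotonicity, evaluate at $N=121$) is the paper's plan. There is a genuine gap, however, because the one quantitative step that matters rests on a false inequality.

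\textbf{The refined bound goes the wrong way.} The bound $\ExpInt(x)\le e^{-x}/(x+1)$ is false. Indeed $e^x\ExpInt(x)=\int_0^\infty e^{-t}(x+t)^{-1}\dd t\ge 1/(x+1)$ by Jensen, so this is a \emph{lower} bound. Your bracket value $\approx 3.77$, and every number derived from it, is therefore unjustified; the exact bracket at $N=121$ is already $\approx 3.84$.

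\textbf{The crude bound is misdiagnosed.} With $\ExpInt(x)\le e^{-x}/x$ the bracket is $4+y$, and the $\lDK$-coefficient is
\[
0.5375(4+y)+1.0155\tfrac{2+y}{\sqrt N}+\tfrac{1+y}{N^{3/2}}\approx 2.47\le 2.5 .
\]
What actually fails at $N=121$ is the constant, $\approx 23.77>23.5$. The lemma's constants are tight, so the numerics are the whole proof, and the proposal never carries them out. Your Minkowski-type fallbacks are also false for $\nK=2$: for $\Q(\sqrt{-3})$ one has $\lDK=\ln 3<0.69\cdot 2$.

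\textbf{What is missing.} You need an upper bound for $\ExpInt$ that beats $e^{-x}/x$ slightly, but not by too much. The paper integrates by parts three times:
\[
\ExpInt(x)\le \frac{e^{-x}}{x}\Bigl(1-\frac1x+\frac2{x^2}\Bigr).
\]
With $x=\frac12\ln N$ this turns the $\alpha_\K$-bracket into $4-2y+12y^2\approx 4.105$ at $N=121$. Even with your separate treatment of the $\gamma_\K$ and $\delta_\K$ terms, this gives coefficients $\approx 2.41$, $-3.53$ and $23.21$, so all three targets hold directly with no trading. The paper additionally simplifies using $\gamma_\K\le 2\alpha_\K$ and $\delta_\K\le 2\alpha_\K+\nK$.

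\textbf{Your $\nK$-worry, and its actual cure.} The worry is real in one sense. Coefficientwise, with your bookkeeping at $N=121$, the bracket must lie in roughly $[4.08,\,4.16]$. A bound for $\ExpInt$ that is too sharp therefore makes the $\nK$-check fail; the exact value does, and so does your valid alternative $\ExpInt(x)\le e^{-x}\ln(1+1/x)$, which gives $\approx 3.96$. The cure is not Minkowski. Since $\alpha_\K\ge 0$, you may always replace the bracket by a larger number. Any valid bound below $\approx 4.16$ can thus be inflated into that window, after which the three coefficient checks go through.
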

\begin{proof}
Integrate by parts three times in the definition of $\ExpInt(x)$ and obtain
\begin{equation*}
\ExpInt(x) \leq \frac{e^{-x}}{x}\Big(1-\frac{1}{x}+\frac{2}{x^2}\Big).
\end{equation*}
Thus,
\begin{equation*}
\ExpInt(\tfrac{1}{2}\ln N) \leq \frac{2}{\sqrt{N}\ln N}(1-2y+8y^2)
\end{equation*}
with $y=1/\ln N$, and from Lemma~\ref{lem:G1} we get
\begin{align*}
|\RR(N)|
\leq \alpha_\K \frac{4-2y+12y^2}{\sqrt{N}\ln N}
     + \beta_\K  \frac{2+3y}{N}
     + \gamma_\K \frac{2y+y^2}{N}
     + \delta_\K \frac{y+y^2}{N^2}.
\end{align*}
Since $\gamma_\K \leq 2\alpha_\K$ and $\delta_\K \leq 2\alpha_\K + \nK$, we obtain
\begin{align*}
\sqrt{N}\ln N|\RR(N)|
&\leq A\alpha_\K + B \nK - C                                           \\
& =   0.5375\cdot A\,\lDK - (1.0355\cdot A - B) \nK + 5.4341\cdot A - C,
\end{align*}
in which
\begin{align*}
A  &:= 4 - 2y + 12y^2
      + \frac{4+2y}{\sqrt{N}}
      + \frac{2+2y}{N^{3/2}},        \\
B  &:= \frac{2\ln N+3}{\sqrt{N}}
      + \frac{\ln N +1}{N^{3/2}\ln N} ,
\qquad\text{and}\qquad
C  := \frac{2\ln N+3}{\sqrt{N}}.
\end{align*}
The three coefficients $0.5375\cdot A$, $B - 1.0355\cdot A$ and $5.4341\cdot A - C$ decrease when $N \geq
121$, and we have
\begin{equation*}
\sqrt{N}\ln N|\RR(N)| \leq 2.5 \lDK - 3.5 \nK + 23.5,
\end{equation*}
which is the claim.
\end{proof}
%
%
%
%
%

Next, we prove bounds for
\begin{gather*}
\sum_{n\leq N} \tilde{\Lambda}_\K(n) f(n),
\quad
\sum_{n\leq N}\tilde{\Lambda}_\K(n)[f(N) + (n-N)f'(N)],\\
\text{and}
\quad
\sum_{n\leq N} \Lambda(n)\big(f(n) - f(N) - (n-N)f'(N)\big)
\end{gather*}
in Lemmas~\ref{lem:G3}, \ref{lem:G4}, and \ref{lem:G5} respectively.
\begin{lemma}\label{lem:G3}
If $\K\neq \Q$ and $N \geq 49$, then
\begin{align*}
\sum_{n\leq N} \tilde{\Lambda}_\K(n) f(n)
 \geq \ln\zeta(\nK) - \frac{4}{\sqrt{N}\ln N}.
\end{align*}
\end{lemma}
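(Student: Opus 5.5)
The plan is to interpret the left-hand side as a truncated prime-ideal sum, discard all but one prime ideal above each rational prime, and compare what remains with $\ln\zeta(\nK)$. The only real work is an explicit bound for the resulting tail.

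\textbf{Step 1: rewrite the sum.} Since $f(x)=(x\ln x)^{-1}$ and $\tilde\Lambda_\K(n)$ collects the contributions of prime-ideal powers of norm $n$, each $\p^m$ with $\Norm\p^m\le N$ contributes
\[
\frac{\ln\Norm\p}{\Norm\p^m\,\ln\Norm\p^m}=\frac{1}{m\,\Norm\p^m}.
\]
Hence
\[
\sum_{n\le N}\tilde\Lambda_\K(n)f(n)=\sum_{\Norm\p^m\le N}\frac{1}{m\Norm\p^m},
\]
and every term is positive. We may therefore discard terms freely.

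\textbf{Step 2: keep one prime ideal per rational prime.} For each rational prime $p$, fix one prime ideal $\p_p\mid p$. Its norm is $p^{f_p}$ with $f_p\le\nK$, so $\Norm\p_p^m\le p^{m\nK}$. Every pair $(p,m)$ with $p^{m\nK}\le N$ then yields an admissible term $1/(m\Norm\p_p^m)\ge 1/(m p^{m\nK})$. This gives
\[
\sum_{n\le N}\tilde\Lambda_\K(n)f(n)\ \ge\ \sum_{p^{m\nK}\le N}\frac{1}{m\,p^{m\nK}}=\ln\zeta(\nK)-T,
\qquad
T:=\sum_{p^{m}>M}\frac{1}{m\,p^{m\nK}},\quad M:=N^{1/\nK}.
\]
It remains to show $T\le 4/(\sqrt N\ln N)$.

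\textbf{Step 3: bound the tail.} Write $1/(m p^{m\nK})=\Lambda(q)/(q^{\nK}\ln q)$ with $q=p^m$, so that
\[
T=\int_{M}^{\infty}\frac{\dd\psi(t)}{t^{\nK}\ln t}.
\]
Split $\psi(t)=t+(\psi(t)-t)$. The main part is
\[
\int_M^\infty\frac{\dd t}{t^{\nK}\ln t}\le\frac{M^{1-\nK}}{(\nK-1)\ln M}=\frac{\nK}{\nK-1}\cdot\frac{N^{-(\nK-1)/\nK}}{\ln N}.
\]
For $\nK=2$ this equals $2/(\sqrt N\ln N)$. For $\nK\ge3$ it is even smaller, because $N^{-(\nK-1)/\nK}\le N^{-2/3}\le N^{-1/2}$ and $\nK/(\nK-1)\le 3/2$.

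The error part is handled by integrating by parts against $\psi(t)-t$. Under RH for $\zeta$ (which we are assuming) I would use Schoenfeld's bound $|\psi(t)-t|\le \sqrt t\ln^2 t/(8\pi)$ for $t\ge 73.2$. After integration by parts, this contributes at most about $\nK M^{1/2-\nK}\ln M/(4\pi(\nK-1/2))$ plus a boundary term of the same size. That is $O(N^{-3/4}\ln N)$ for $\nK=2$, and it is absorbed into the remaining budget of $2/(\sqrt N\ln N)$ once $N$ is moderately large.

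\textbf{Main obstacle.} The difficulty is the small range of $N$, where $M=N^{1/\nK}$ may be as small as $7$ (when $N=49$ and $\nK=2$) and Schoenfeld's bound does not yet apply. For that range I would first try the unconditional Chebyshev-type bound $\psi(t)\le1.04\,t$. In the integration by parts I would keep the negative boundary term $-\psi(M)/(M^{\nK}\ln M)$ together with an explicit lower bound for $\psi(M)$. If this is still insufficient (the crude constant is about $4.16$ for $\nK=2$), I would compute $T$ exactly for the finitely many prime powers $q$ below a cutoff such as $10^4$, use the analytic bound beyond that cutoff, and check numerically that $T\sqrt N\ln N\le4$ holds for all $N\ge49$. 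The case $\nK=2$ is the critical one; larger degrees have ample room.
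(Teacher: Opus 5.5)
Your Steps 1 and 2 are correct and coincide with the paper's reduction. The sum equals $\sum_{\Norm\p^m\le N}1/(m\Norm\p^m)$, and keeping one prime above each $p$ gives the lower bound $\ln\zeta(\nK)-T$, with $T=\sum_{p^m>M}1/(mp^{m\nK})$ and $M=N^{1/\nK}$.

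The gap is in Step 3, which carries the whole content of the lemma: the constant $4$, uniformly in $N\ge49$ and $\nK\ge2$. There are two problems.

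First, the lemma has no RH hypothesis, so the Schoenfeld bound $|\psi(t)-t|\le\sqrt t\ln^2t/(8\pi)$ is not available. As written, your main argument proves only a conditional statement.

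Second, even granting RH, that bound applies only when $M\ge73.2$, i.e.\ $N\ge73.2^{\nK}$. For the complementary range you list things you would try rather than give an argument. That range is not a finite set of cases: for each of the infinitely many degrees $\nK$ it is $49\le N<73.2^{\nK}$. So ``compute $T$ below $10^4$ and check numerically'' cannot exhaust it, and ``larger degrees have ample room'' is asserted, not shown. Your own estimate that the crude Chebyshev constant is $4.16>4$ at $\nK=2$ shows this is exactly where the inequality is tight.

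The repair is your first fallback, carried out uniformly, and it removes the need for RH. Do partial summation against $\psi(t)\le1.04\,t$, which holds for all $t>0$. Use the exact identity $\int_M^\infty\big(\frac{\nK-1}{t^{\nK}\ln t}+\frac{1}{t^{\nK}\ln^2t}\big)\dd t=\frac{M^{1-\nK}}{\ln M}$, together with your bound for $\int_M^\infty \dd t/(t^{\nK}\ln t)$. This gives, for every $M>1$,
\[
T\le\frac{M^{1-\nK}}{\ln M}\Big(\frac{1.04\,\nK}{\nK-1}-\frac{\psi(M)}{M}\Big).
\]
For $\nK=2$ this is $\frac{2}{\sqrt N\ln N}\big(2.08-\psi(M)/M\big)$. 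It is at most $4/(\sqrt N\ln N)$ as soon as $\psi(M)\ge0.08M$, which holds for all $M\ge7$. The negative boundary term is what rescues the case you found borderline.

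For $\nK\ge3$ you may drop $\psi(M)$. The ratio to $4/(\sqrt N\ln N)$ is then $\frac{0.26\,\nK^2}{\nK-1}N^{-(\nK-2)/(2\nK)}$, which is below $0.62$ whenever $N\ge\max(49,2^{\nK})$, i.e.\ $M\ge2$. When $M<2$ one simply has $T=\ln\zeta(\nK)$, which is much smaller.

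The paper's proof follows the same scheme with $\Pi(x)=\sum_{p^m\le x}1/m$ and weight $x^{-\nK}$. It uses the unconditional bounds $\Pi(x)\le1.5x/\ln x$ for $x\ge2$, and $\Pi(x)\ge x/\ln x$ for $x\ge7$ to keep the boundary term. This yields $T\le2M/(N\ln M)\le4/(\sqrt N\ln N)$ when $M\ge7$. The case $M<7$, which forces $\nK\ge3$, is handled by comparison with $\sum_{k>M}k^{-\nK}$.
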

\begin{proof}
First recall that $f(n) = (n\ln n)^{-1}$ and observe that
\begin{align*}
\sum_{n\leq N}\frac{\tilde{\Lambda}_\K(n)}{n\ln n}
 = \sum_{\substack{\p,m\\\Norm\p^m\leq N}}\frac{\ln\Norm\p}{\Norm\p^m (m\ln\Norm\p)}
 = \sum_{\substack{\p,m\\\Norm\p^m\leq N}}\frac{1}{m\Norm\p^m}.
\end{align*}
For a prime ideal $\p$, let $p$ be the prime below $\p$ and let $\ell=\ell(\p)$ denote its inertia
degree, so that $\Norm\p=p^\ell$. Group the prime ideals according to the prime below them and get
\begin{align*}
\sum_{n\leq N}\frac{\tilde{\Lambda}_\K(n)}{n\ln n}
 = \sum_{p\leq N}\sum_{\substack{\p,m\\p^{\ell m}\leq N}}\frac{1}{mp^{\ell m}}.
\end{align*}
For each prime $p$, there is at least one prime ideal $\p$ such that $\p \mid p$, and $\ell(\p)\leq \nK$
in any case. Therefore,
\begin{align*}
\sum_{n\leq N}\frac{\tilde{\Lambda}_\K(n)}{n\ln n}
&\geq \sum_{\substack{p,m\\p^{m\nK}\leq N}}\frac{1}{mp^{m\nK}}
 =    \ln\zeta(\nK) - \sum_{\substack{p,m\\p^{m\nK} > N}}\frac{1}{mp^{m\nK}}.
\end{align*}
Let $\Pi(x) = \pi(x) + \frac{1}{2}\pi(x^{1/2}) + \frac{1}{3}\pi(x^{1/3}) + \cdots $.
Then, by partial summation
\[
\sum_{\substack{p,m\\p^m > N^{1/\nK}}}\frac{1}{mp^{m\nK}}
= \int_{N^{1/\nK}}^\infty \frac{\dd \Pi(x)}{x^{\nK}}
= -\frac{\Pi(N^{1/\nK})}{N} + \nK\int_{N^{1/\nK}}^\infty \frac{\Pi(x)}{x^{\nK+1}}\dd x.
\]
We know that $\Pi(x) \leq 1.5\frac{x}{\ln x}$ for every $x \geq 2$ and $\Pi(x)\geq \frac{x}{\ln x}$ for
every $x\geq 7$: This may be verified directly when $x\leq 500$ while for $x\geq 500$ they follow
respectively by the upper bounds $\Pi(x) \leq \pi(x) + \frac{1}{2}\pi(\sqrt{x}) + \frac{\ln x}{3\ln
2}\pi(\sqrt[3]{x})$ and the $\pi(x)\leq 1.25506 \frac{x}{\ln x}$ in \cite[Corollary 1]{RosserSchoenfeld}
(this same inequality is applied also for $\pi(\sqrt{x})$ and $\pi(\sqrt[3]{x})$), and by the lower
bounds $\Pi(x) \geq \pi(x)$ and $\pi(x)\geq x/\ln x$ again in \cite[Corollary 1]{RosserSchoenfeld}.
%
%
%
\noindent %
Thus, in case $N\geq 7^{\nK}$ we have
\begin{align*}
\sum_{\substack{p,m\\p^{m} > N^{1/\nK}}}\!\!\!\!\frac{1}{mp^{m\nK}}
&\leq  -\frac{N^{1/\nK}}{N\ln(N^{1/\nK})} + \frac{3}{2}\nK\int_{N^{1/\nK}}^\infty \frac{\dd x}{x^{\nK}\ln x}\\
&\leq  -\frac{N^{1/\nK}}{N\ln(N^{1/\nK})} + \frac{3}{2}\frac{\nK}{\nK-1}\frac{N^{1/\nK}}{N\ln(N^{1/\nK})}
 \leq  \frac{2 N^{1/\nK}}{N\ln(N^{1/\nK})}.
\end{align*}
The function $x/\ln x$ increases for $x\geq e$, and $\nK\geq 2$, hence the bound is smaller than what we
get for $\nK=2$, that is,
\[
\sum_{\substack{p,m\\p^{m} > N^{1/\nK}}}\frac{1}{mp^{m\nK}}
 \leq  \frac{4}{\sqrt{N}\ln N}.
\]
Assume now that $N < 7^{\nK}$. In this case we deduce the conclusion from the elementary upper bound
given by the corresponding sum over all integers. In fact,
\begin{align*}
\sum_{\substack{p,m\\p^{m} > N^{1/\nK}}}\!\!\!\!\frac{1}{mp^{m\nK}}
&\leq \sum_{k > N^{1/\nK}}\frac{1}{k^{\nK}}
 \leq \int_{N^{1/\nK}}^\infty\frac{\dd t}{t^{\nK}}
 =    \frac{1}{\nK-1}\frac{N^{1/\nK}}{N},
\end{align*}
so that it is sufficient to prove that
\[
\frac{1}{\nK-1}\frac{N^{1/\nK}}{N}
\leq \frac{4}{\sqrt{N}\ln N},
\quad
\text{i.e.}
\quad
\frac{1}{\nK-1} \frac{\ln N}{N^{1/2-1/\nK}} \leq 4.
\]
Since we are assuming that $49\leq N<7^{\nK}$, this is implied by the inequality $\frac{\nK}{\nK-1}
\frac{\ln 7}{49^{1/2-1/\nK}} \leq 4$ which holds true for every $\nK \geq 2$.
\end{proof}
Before we proceed to the statement and proof of Lemma~\ref{lem:G4}, observe that
\begin{equation}\label{eq:G11}
\sum_{n\leq N}\tilde{\Lambda}_\K(n)[f(N) + (n-N)f'(N)]
=     \frac{\psi_\K(N)}{N\ln N} + \Big(1+\frac{1}{\ln N}\Big)\frac{\psi_\K^{(1)}(N)}{N^2\ln N},
\end{equation}
in which
\begin{equation*}
\psi_\K(x)
= \sum_{\Norm\mathfrak{I}\leq x}\Lambda_\K(\mathfrak{I})
= \sum_{n\leq x}\tilde{\Lambda}_\K(n)
\quad\text{and}\quad
\psi^{(1)}_\K(x) = \int_2^x \psi_\K(u)\dd u.
\end{equation*}
Observe also that $\psi=\psi_\Q$, and we further denote $\psi^{(1)}(x) = \psi^{(1)}_\Q(x) =
\int_2^x\psi(u)\dd u$. The standard bound $\tilde{\Lambda}_\K(n)\leq \nK\Lambda(n)$ implies that
\begin{equation*}
\psi_\K(x) \leq \nK\psi(x)
\qquad\text{and}\qquad
\psi^{(1)}_\K(x) \leq \nK\psi^{(1)}(x),
\end{equation*}
so the desired bound can be deduced from upper bounds for $\psi(x)$ and $\psi^{(1)}(x)$, such
as~\cite[Theorem~10]{Schoenfeld1} or \cite[Theorem~1.1]{LeeNosal} and~\cite[Theorem~1.1]{GrenieMolteni2},
respectively.
%
%
The bound we obtain in this way has the form $c \frac{\nK}{\ln N}$ for a suitable constant $c$ (in our
computation, one can take $c=2$ when $N\geq 121$). However, a $\sqrt{N}$ can be gained in the denominator
by directly estimating $\psi_\K$ and $\psi^{(1)}_\K$.
%
\begin{lemma}\label{lem:G4}
Assume that $\zeta_{\K}$ satisfies GRH and let $N \geq 121$. Then
\begin{align*}
\sum_{n\leq N}\tilde{\Lambda}_\K(n)[f(N) + (n-N)f'(N)]
 \leq  \frac{3.2 \lDK}{\sqrt{N}\ln N}
     + \frac{4 \nK}{N\ln N}
     + \frac{3.4}{\ln N}.
\end{align*}
\end{lemma}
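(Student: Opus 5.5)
The plan is to start from the identity \eqref{eq:G11}, which expresses the left-hand side through $\psi_\K(N)$ and $\psi^{(1)}_\K(N)$. The naive route via $\psi_\K\leq \nK\psi$ loses the saving of $\sqrt N$ in the discriminant term. Instead I would insert an explicit GRH estimate for $\psi^{(1)}_\K$ of the shape
\[
\Bigl|\psi^{(1)}_\K(x)-\frac{x^2}{2}\Bigr|\leq x^{3/2}\bigl(a\lDK+b\,\nK+c\bigr)+(\text{lower order terms in }x),
\]
valid for $x\geq 121$. This is the kind of bound obtained in \cite{GrenieMolteni2}, and it is also what underlies Lemma~\ref{lem:G1}. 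The essential feature is that, after one integration, the error term for $\psi^{(1)}_\K$ carries no factor $\ln x$ in front of $\lDK$. That is exactly what the target $3.2\lDK/(\sqrt N\ln N)$ requires. The corresponding GRH bound for $\psi_\K$ itself has an extra $\ln x$, and that is why I will avoid using it.

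For the term $\psi^{(1)}_\K(N)/(N^2\ln N)$ I would insert this estimate directly. This gives $\frac{1}{2\ln N}\bigl(1+\frac1{\ln N}\bigr)$ plus an error of order $(a\lDK+b\nK+c)/(\sqrt N\ln N)$.

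For $\psi_\K(N)$ I would use the monotonicity of $\psi_\K$ and bound it by a forward difference of $\psi^{(1)}_\K$:
\[
\psi_\K(N)\leq \frac{\psi^{(1)}_\K(N+h)-\psi^{(1)}_\K(N)}{h}.
\]
Taking $h=N$ (or $h=\lambda N$ with $\lambda$ tuned later), the main term becomes $\frac{(2N)^2-N^2}{2N}=\frac32 N$. The error becomes $\frac{(2N)^{3/2}+N^{3/2}}{N}(a\lDK+b\nK+c)$, which is $O\bigl(\sqrt N(\lDK+\nK+1)\bigr)$. Dividing by $N\ln N$ gives $\frac{3}{2\ln N}$ plus an error of order $(\lDK+\nK+1)/(\sqrt N\ln N)$.

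Adding the two contributions, the main terms total about $\frac{2}{\ln N}+\frac{1}{2\ln^2N}$. For $N\geq121$ this is well below $3.4/\ln N$, which leaves room to absorb the constant parts $c$ of the error terms, since $1/\sqrt N\leq 1/11$. The $\lDK$ coefficients collect into something like $(2^{3/2}+1+\tfrac12+\dots)a$, and this should fit under $3.2$. The $\nK$ terms should be handled either by the lower-order parts of the $\psi^{(1)}$ bound or by using $\psi_\K\leq\nK\psi$ where it is cheap. I expect them to produce the small term $4\nK/(N\ln N)$; if the $\nK$ coefficient in the cited bound is negative, it can simply be discarded.

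The main obstacle is the bookkeeping of constants. I need to pick the exact explicit form of the $\psi^{(1)}_\K$ estimate and the value of $h$ (a tradeoff between the main term $1+\lambda/2$ and the error factor $((1+\lambda)^{3/2}+1)/\lambda$) so that the coefficient of $\lDK$ stays at most $3.2$ and the constant stays at most $3.4$ uniformly for $N\geq121$. The final step is then to check, as in Lemma~\ref{lem:G2}, that the resulting coefficient functions of $N$ are monotone on $N\geq121$, so that evaluating them at $N=121$ suffices.
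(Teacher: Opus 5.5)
\textbf{The gap: the constants do not close.} The reduction to \eqref{eq:G11} and the difference-quotient bound $\psi_\K(N)\le\bigl(\psi^{(1)}_\K(N+h)-\psi^{(1)}_\K(N)\bigr)/h$ are correct and give a bound of the right shape. The step you defer to bookkeeping, however, fails for the constants in the statement. The explicit $\psi^{(1)}_\K$-estimate available for all $\K$ is \cite[Theorem~1.1]{GrenieMolteni2}. Its error is at most $\alpha_\K(x^{3/2}+2x+2)+\nK(x\ln x+1)$ with $\alpha_\K=0.5375\lDK-1.0355\nK+5.4341$. So your constant $c$ is about $5.43$, which is not small against $\sqrt{121}=11$.

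With $h=N$ your main terms are indeed about $2.1/\ln N$ at $N=121$. But the $c$-parts of the errors add about $5.4341\bigl(2^{3/2}+1+\tfrac{6}{11}+\dots\bigr)/11\approx 2.16$ from the difference quotient. They add another $0.71$ from $(1+\frac{1}{\ln N})\psi^{(1)}_\K(N)/(N^2\ln N)$. The total is about $4.97/\ln N$, against the allowed $3.4/\ln N$.

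Tuning $h=\lambda N$ does not rescue this. The difference-quotient error has constant part at least $5.4341\,\frac{(1+\lambda)^{3/2}+1}{\lambda\sqrt N}$, and $\frac{(1+\lambda)^{3/2}+1}{\lambda}\ge 3$ for every $\lambda>0$, with equality at $\lambda=3$. Balancing this against the main term $1+\lambda/2$, the best total at $N=121$ is still about $4.9/\ln N$. The negative $\nK$-part of $\alpha_\K$, which you proposed to discard, recovers only about $0.35$ to $0.4$ times $\nK/\ln N$, counting the allowance $4\nK/(N\ln N)$. The slack in the $\lDK$-coefficient (you get roughly $2.9$ to $3.1$ instead of $3.2$) is worth at most about $0.03\lDK/\ln N$ at $N=121$. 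So for $\nK\le 3$ and moderate $|\DK|$ your argument does not prove the inequality on the whole range $N\ge 121$. It only works for $N$ large enough or $\nK$ large.

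\textbf{What the paper does instead.} The paper never derives $\psi_\K$ from $\psi^{(1)}_\K$. It bounds $\psi_\K(N)$ directly by \cite[Theorem~1.1]{GrenieMolteni3} with the trade-off parameter $T=11$. This removes the factor $\ln x$ in front of $\lDK$ at the price of a main term $1.1x$:
$\psi_\K(x)\le 1.1x+(2.3\sqrt x+1.1)\lDK+(\sqrt x+2)\nK+9\sqrt x+9$.
Its constant $9\sqrt x$ is far below the roughly $16.3\sqrt x$ that your difference quotient forces. This yields \eqref{eq:G13} with $2/\ln N$, and together with $1.4/\ln N$ from \eqref{eq:G12} gives the $3.4$.

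\textbf{A possible repair of your route.} If you want to use only $\psi^{(1)}_\K$, do not bound the two terms of \eqref{eq:G11} separately. Choose $h=N/(1+1/\ln N)$. Then the term $-\psi^{(1)}_\K(N)/(hN\ln N)$ from the difference quotient cancels the second term of \eqref{eq:G11} exactly. The whole sum is then at most $(1+\frac{1}{\ln N})\psi^{(1)}_\K(N+h)/(N^2\ln N)$, so only one upper bound is needed. At $N=121$ this gives about $(3.69-0.22\nK)/\ln N+1.82\lDK/(\sqrt N\ln N)$. A short numerical check indicates this fits the statement for all $N\ge121$ when $\nK\ge 2$. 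The case $\K=\Q$ can be handled with the RH constants of \eqref{eq:G14}.
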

\begin{proof}
For $\psi^{(1)}_\K$, we use \cite[Theorem~1.1]{GrenieMolteni2}, which reads:
\begin{equation*}
\psi^{(1)}_\K(x)
\leq \frac{x^2}{2}
     + \alpha'_\K x^{3/2}
     + \beta_\K  x\ln x
     + \gamma'_\K x
     + \delta_\K
\qquad\forall x\geq 3,
\end{equation*}
in $\alpha'_\K\leq \alpha_\K$, $\gamma_\K' \leq \gamma_\K$ and $\alpha_\K$, $\beta_\K$, $\gamma_\K$ and
$\delta_\K$ are as in Lemma~\ref{lem:G1}. The inequalities $\beta_\K\leq \nK$, $\gamma_\K' \leq
2\alpha_\K$ and $\delta_\K \leq 2\alpha_\K + \nK$ ensure that
%
\begin{align*}
\psi^{(1)}_\K(N)
&\leq \frac{N^2}{2}
    + (N^{3/2} + 2N + 2)\big(0.5375 \lDK + 5.4341\big) \\
&\quad
    + \big(N\ln N + 1 - 1.0355(N^{3/2} + 2N + 2)\big)\nK
\end{align*}
so that
\begin{equation}\label{eq:G12}
\frac{1+1/\ln N}{N^2\ln N}\psi^{(1)}_\K(N)
\leq \frac{0.8 \lDK}{\sqrt{N}\ln N}
    + \Big(\frac{-1}{\sqrt{N}} + \frac{2}{N}\Big)\frac{\nK}{\ln N}
    + \frac{1.4}{\ln N}
\qquad\forall N\geq 121.
\end{equation}
The term $\psi_\K(x)$ is more delicate to deal with. In fact, we know that
\begin{equation*}
|\psi_\K(x)-x| \leq \sqrt{x}\Big[\Big(\frac{\ln   x}{2\pi} + 2\Big)\lDK
                               + \Big(\frac{\ln^2 x}{8\pi} + 2\Big)\nK
                            \Big]
\qquad\forall x\geq 100
\end{equation*}
from~\cite[Corollary~1.2]{GrenieMolteni3}, but the $\ln x$ that multiplies $\lDK$ wipes out the $\ln N$
term in the denominator of \eqref{eq:G11}, leaving only $(\lDK)/\sqrt{N}$, which does not go to zero when
we set $N = (\lDK)^2$. To overcome this difficulty we use another upper bound for $\psi_\K$ from that
paper but with a different value of a parameter controlling the trade-off between the accuracy in the $x$
and in the discriminant terms. In this way we are able to remove that factor $\ln x$ at the cost of
having only a slightly larger coefficient in the term $x$. In fact, setting $T=11$
in~\cite[Theorem~1.1]{GrenieMolteni3} we get
\begin{equation*}
\psi_\K(x)
\leq 1.1 x
     + (2.3\sqrt{x} + 1.1)\lDK
     + (\sqrt{x} + 2)\nK
     + 9\sqrt{x} + 9
\qquad\forall x\geq 2.
\end{equation*}
%
%
%
To recover this computation, we estimated the term
\begin{equation*}
\epsilon_\K(x,T) := \max\Big(0, d_\K\log x - 1.44\frac{\sqrt{x}}{T}\nK\Big)
\end{equation*}
in~\cite[Theorem~1.1]{GrenieMolteni3}, %
in which $d_{\K} = r_1+r_2-1 \leq \nK$ and $\ln x - 1.44\frac{\sqrt{x}}{T}$ has been estimated with its
maximum in $x$, i.e. with $2\ln\big(\frac{2T}{1.44}\big) - 2$, which is $< 4$ since $T = 11$. %
In this way for $N\geq 121$ we have
\begin{equation}\label{eq:G13}
\frac{\psi_\K(N)}{N\ln N}
\leq \frac{2.4 \lDK}{\sqrt{N}\ln N}
     + \Big(\frac{1}{\sqrt{N}} + \frac{2}{N}\Big)\frac{\nK}{\ln N}
     + \frac{2}{\ln N}.
\end{equation}
%
The result follows adding~\eqref{eq:G12} and~\eqref{eq:G13} in the original sum~\eqref{eq:G11}.
\end{proof}
\begin{lemma}\label{lem:G5}
Assume that $\zeta$ satisfies RH and let $N\geq 121$. Then
\begin{equation*}
\sum_{n\leq N} \Lambda(n)\big(f(n) - f(N) - (n-N)f'(N)\big)
= \ln\ln N + \gamma - \frac{3/2}{\ln N} - \frac{1/2}{\ln^2 N} + \frac{c(N)}{\sqrt{N}\ln N}
\end{equation*}
with $|c(N)|\leq 0.6$. In particular, in this range we have
\begin{equation*}
\sum_{n\leq N} \Lambda(n)\big(f(n) - f(N) - (n-N)f'(N)\big)
\leq \ln\ln N + \gamma - \frac{3/2}{\ln N}.
\end{equation*}
\end{lemma}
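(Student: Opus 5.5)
The plan is to write the sum as a Stieltjes integral against $\psi$ and integrate by parts twice, so that only $\psi^{(1)}$ appears. Put $g_N(u)=f(u)-f(N)-(u-N)f'(N)$. Then $g_N(N)=g_N'(N)=0$, and, since the subtracted part is linear in $u$, $g_N''=f''$. Because $\psi(u)=0$ for $u<2$ and $\psi^{(1)}(2)=0$, we get
\[
\sum_{n\le N}\Lambda(n)g_N(n)=\int_{2^-}^N g_N\,\dd\psi = \int_2^N \psi^{(1)}(u)f''(u)\dd u .
\]
The boundary terms at $N$ vanish because $g_N(N)=g_N'(N)=0$. The boundary terms at $2$ vanish because $\psi(2^-)=0$ and $\psi^{(1)}(2)=0$.

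Next write $\psi^{(1)}(u)=\frac{u^2}{2}+E(u)$ and treat the two pieces separately.

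\emph{Main piece.} Integrate $\int_2^N\frac{u^2}{2}g_N''\dd u$ back by parts, using $g_N''=f''$. This gives
\[
\int_2^N g_N(u)\dd u+2g_N(2)-2g_N'(2).
\]
The integral evaluates explicitly as
\[
\ln\ln N-\ln\ln 2-(N-2)f(N)+\frac{(N-2)^2}{2}f'(N), \qquad f'(N)=-\frac{\ln N+1}{N^2\ln^2N}.
\]
Expanding, this yields $\ln\ln N-\ln\ln2-\frac{3/2}{\ln N}-\frac{1/2}{\ln^2N}$ plus terms of size $O(1/(N\ln N))$. In $2g_N(2)-2g_N'(2)$, the $N$-dependent parts (involving $f(N)$ and $Nf'(N)$) are also $O(1/(N\ln N))$. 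The remaining $N$-independent part is $2f(2)-2f'(2)$.

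\emph{Error piece.} We have $\int_2^N E f''=\int_2^\infty Ef''-\int_N^\infty Ef''$. The first integral converges under RH and is independent of $N$. So the sum equals
\[
\ln\ln N+C-\frac{3/2}{\ln N}-\frac{1/2}{\ln^2N}+(\text{small terms}),
\]
with an absolute constant $C$. To show $C=\gamma$, let $N\to\infty$ and compare with Mertens' theorem in the form $\sum_{p^m\le N}\frac{1}{mp^m}=\ln\ln N+\gamma+o(1)$. Indeed, $\sum_{n\le N}\Lambda(n)f(n)$ is exactly this sum, and the linear correction terms tend to $0$ by the prime number theorem.

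The main obstacle is making the error explicit with $|c(N)|\le0.6$. Crude explicit bounds for $\psi^{(1)}$ are not enough: Lemma~\ref{lem:G4}'s source with $\K=\Q$ gives a coefficient of $x^{3/2}$ of about $4.4$, which is far too large. Instead I would use the explicit formula
\[
\psi^{(1)}(x)=\frac{x^2}{2}-\sum_\rho\frac{x^{\rho+1}}{\rho(\rho+1)}+O(x\ln x).
\]
Under RH, and using $\sum_\rho|\rho(\rho+1)|^{-1}\le0.0462$, this gives $|E(u)|\le 0.0462\,u^{3/2}+O(u\ln u)$ with explicit lower-order constants. Since $f''(u)=\frac{2\ln^2u+3\ln u+2}{u^3\ln^3u}$, we get
\[
\Big|\int_N^\infty Ef''\Big|\lesssim 0.0462\cdot\frac{4+O(1/\ln N)}{\sqrt N\ln N}\approx\frac{0.2}{\sqrt N\ln N}.
\]
Adding all the $O(1/(N\ln N))$ contributions and checking them numerically for $N\ge121$ should keep the total below $0.6/(\sqrt N\ln N)$.

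Finally, the inequality follows from $-\frac{1/2}{\ln^2N}+\frac{0.6}{\sqrt N\ln N}\le0$, which is equivalent to $1.2\ln N\le\sqrt N$. This holds for $N\ge121$, since $1.2\ln121\approx5.8\le11$.
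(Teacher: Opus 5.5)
Your argument is essentially the paper's proof: the same double integration by parts to $\int_2^N f''\psi^{(1)}$, the same split $\psi^{(1)}=x^2/2+R$ with the tail $\int_N^\infty f''R$ giving $c(N)$, and the same identification of the constant via Mertens' theorem; the only difference is that the paper quotes the RH bound $|R(x)|\le 0.0462x^{3/2}+1.838x$ from Greni\'e--Molteni rather than rederiving it from the explicit formula. That formula's lower-order term is in fact $-x\ln(2\pi)+O(1)$, not merely $O(x\ln x)$. When you do the numerics, note two things. First, the $O(1/(N\ln N))$ terms you anticipate in the main piece cancel exactly: integrating $\int_2^N \frac{x^2}{2}f''$ by parts directly gives $\ln\ln N+\text{const}-\frac{3/2}{\ln N}-\frac{1/2}{\ln^2N}$ exactly, and bounding those terms in absolute value would overshoot the $0.6$ budget at $N=121$. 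Second, the $1.838x$ term is not negligible: it contributes about $0.37$ at $N=121$, for a total $0.0462(4+1/\ln N)+1.838(2+1/\ln N)/\sqrt N\approx0.56\le0.6$.
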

%
\begin{proof}
To ease the notation, we write
\begin{equation*}
 \Sigma(N) = \sum_{n\leq N} \Lambda(n)\big(f(n) - f(N) - (n{-}N)f'(N)\big)
\end{equation*}
in this proof. Using the Riemann--Stieltjes representation of the sum and integration by parts twice,
we have
\begin{align*}
\Sigma(N)
&= \int_{2^-}^N \!\! \big(f(x) - f(N) - (x{-}N)f'(N)\big)\dd\psi(x)\\
&= -\int_2^N \big(f'(x) - f'(N)\big)\psi(x) \dd x
=  \int_2^N f''(x)\psi^{(1)}(x) \dd x.
\end{align*}
Under RH, \cite[Theorem~1.1]{GrenieMolteni2} ensures that $\psi^{(1)}(x) = x^2/2 + R(x)$ with
\begin{equation}\label{eq:G14}
|R(x)| \leq 0.0462x^{3/2} + 1.838 x \qquad \forall x\geq 3,
\end{equation}
so that
\begin{align}
\Sigma(N)
&= \int_2^N f''(x)\frac{x^2}{2} \dd x + \int_2^N f''(x)R(x)\dd x.                              \notag\\
\intertext{The first integral can be evaluated explicitly, and the computation gives}
\Sigma(N)
&= \Big[f'(x)\frac{x^2}{2}\Big]_2^N - \int_2^N f'(x)x\dd x + \int_2^N f''(x)R(x)\dd x          \notag\\
&= \int_2^N f(x)\dd x + \Big[f'(x)\frac{x^2}{2}- f(x)x\Big]_2^N + \int_2^N f''(x)R(x)\dd x     \notag\\
&= \ln\ln N + c - \frac{3/2}{\ln N} - \frac{1/2}{\ln^2 N} + \int_2^N f''(x)R(x)\dd x,          \notag
\intertext{for a suitable constant $c$. The bound~\eqref{eq:G14} shows that the remaining integral converges,
hence there exists a new constant $c'$ such that}
\Sigma(N)
&= \ln\ln N + c' - \frac{3/2}{\ln N} - \frac{1/2}{\ln^2 N} + \int_N^{+\infty} f''(x)R(x)\dd x. \label{eq:G15}
\end{align}
Moreover,
\begin{align*}
\int_N^{+\infty}f''(x)x^{3/2}\dd x
 &=  \frac{3}{2}f(N)\sqrt{N} - f'(N)N^{3/2} + \frac{3}{4}\int_N^{+\infty}\frac{\dd x}{x^{3/2}\ln x}  \\
 &\leq \frac{3}{2}f(N)\sqrt{N} - f'(N)N^{3/2} + \frac{3/2}{\sqrt{N}\ln N} \\
 &= \frac{1}{\sqrt{N}\ln N}\Big(4 + \frac{1}{\ln N}\Big)
\intertext{and}
\int_N^{+\infty}f''(x)x\dd x
 &= f(N) - f'(N)N
  = \frac{1}{N\ln N}\Big(2 + \frac{1}{\ln N}\Big).
\end{align*}
Since $f''(x) \geq 0$, the bound for $R(x)$ given in \eqref{eq:G14} gives
\begin{align}
\sqrt{N}\ln N
\Big|\int_N^{+\infty} f''(x)R(x)\dd x\Big|
&\leq \sqrt{N}\ln N\int_N^{+\infty} f''(x)|R(x)| \dd x                                            \notag\\
&\leq 0.0462 \Big(4 + \frac{1}{\ln N}\Big) + \frac{1.838}{\sqrt{N}}\Big(2 + \frac{1}{\ln N}\Big)
 \leq 0.6                                                                                         \label{eq:G16}
\end{align}
for $N\geq 121$.
%
Plugging~\eqref{eq:G16} into~\eqref{eq:G15} we get the claim, but to complete the proof we have to identify
$c'$ with $\gamma$. For this purpose we first notice that
\begin{equation*}
|f(N) + (n-N)f'(N)| \ll \frac{1}{N\ln N},
\end{equation*}
so that the crude bound for $\psi(x)\ll x$ already gives
\begin{align*}
\sum_{n\leq N} \Lambda(n)&\big(f(n) {-} f(N) {-} (n{-}N)f'(N)\big)
 = \sum_{n\leq N} \Lambda(n)f(n) {+} o(1)
 = \sum_{n\leq N} \frac{\Lambda(n)}{n\ln n} {+} o(1)
\intertext{and Mertens' result gives (see proof of Theorem 2.7 in \cite{MontgomeryVaughan})}
&= \ln\ln N + \gamma + o(1).
\qedhere
\end{align*}
%
\end{proof}
\noindent
We also need the following extension to a generic field $\L$ of the result in Lemma~\ref{lem:G5}.
\begin{lemma}\label{lem:G6}
Assume that $\L\neq \Q$ satisfies GRH and let $N\geq 121$. Then
\begin{multline*}
\sum_{n\leq N} \tilde{\Lambda}_\L(n)\big(f(n) - f(N) - (n-N)f'(N)\big) \\
\leq \ln\ln N + \gamma + \ln\kL
 - \frac{3/2}{\ln N} - \frac{1/2}{\ln^2 N}
 + 2.5\frac{\lDL - \nL + 10}{\sqrt{N}\ln N}.
\end{multline*}
\end{lemma}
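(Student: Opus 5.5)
The plan is to combine Lemma~\ref{lem:G1}, applied to the field $\L$, with Lemma~\ref{lem:G5}. Write $g_N(n)=f(n)-f(N)-(n-N)f'(N)$. Lemma~\ref{lem:G1} holds for any number field under GRH for $\zeta_\L$ and $\zeta$, so applied to $\L$ it gives
\[
\sum_{n\leq N}\tilde{\Lambda}_\L(n)g_N(n)
= \ln\kL + \sum_{n\leq N}\Lambda(n)g_N(n) - \RR_\L(N).
\]
This is an exact identity. Hence the sum over $\tilde{\Lambda}_\L$ equals $\ln\kL$ plus the rational sum, up to the remainder $\RR_\L(N)$ for $\L$.

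Next I substitute the first, exact form of Lemma~\ref{lem:G5}, valid for $N\geq121$:
\[
\sum_{n\leq N}\Lambda(n)g_N(n)=\ln\ln N+\gamma-\frac{3/2}{\ln N}-\frac{1/2}{\ln^2N}+\frac{c(N)}{\sqrt N\ln N},
\qquad |c(N)|\leq 0.6.
\]
The remainder is bounded by Lemma~\ref{lem:G2} applied to $\L$:
\[
|\RR_\L(N)|\leq \frac{2.5\lDL-3.5\nL+23.5}{\sqrt N\ln N}.
\]
Adding the two error terms gives a total error at most
\[
\frac{2.5\lDL-3.5\nL+24.1}{\sqrt N\ln N}.
\]

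It remains to check that this is at most $2.5(\lDL-\nL+10)/(\sqrt N\ln N)$. Comparing numerators, the required inequality reads
\[
-3.5\nL+24.1\leq -2.5\nL+25,
\]
that is, $-\nL\leq 0.9$, which always holds. Note that $\L\neq\Q$ is not actually needed for this step, but it is harmless.

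The main point to verify is hypotheses rather than computation. Lemma~\ref{lem:G1} and Lemma~\ref{lem:G2} are stated for a general field under GRH for that field and for $\Q$, so they transfer to $\L$ verbatim. Lemma~\ref{lem:G5} needs RH for $\zeta$, which is the GRH assumption for $\Q$ and is implicitly part of ``GRH as in Lemma~\ref{lem:G1}''. With these in place the statement follows directly.
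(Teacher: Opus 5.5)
Your arithmetic is right. Lemma~\ref{lem:G1} applied to $\L$ gives $\sum_{n\le N}\tilde{\Lambda}_\L(n)g_N(n)=\ln\kL+\sum_{n\le N}\Lambda(n)g_N(n)-\RR_\L(N)$. Lemmas~\ref{lem:G5} and~\ref{lem:G2} then give the numerator $2.5\lDL-3.5\nL+24.1\le 2.5(\lDL-\nL+10)$.

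The gap is in the hypotheses. Lemma~\ref{lem:G6} assumes GRH only for $\zeta_\L$, but every lemma you invoke needs RH for $\zeta$:
\begin{itemize}
\item Lemma~\ref{lem:G1}, and hence Lemma~\ref{lem:G2}, is a formula for $\ln\kL=\ln(\zeta_\L/\zeta)(1)$ and is stated under GRH for both $\zeta_\L$ and $\zeta$.
\item Lemma~\ref{lem:G5} rests on the RH bound~\eqref{eq:G14}.
\end{itemize}
Your remark that RH is ``implicitly part of'' the hypothesis is not justified. The paper specifies, lemma by lemma, which zeta functions are assumed to satisfy GRH. Moreover, GRH for $\zeta_\L$ yields RH for $\zeta$ only if $\zeta_\L/\zeta$ is known to be entire. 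That holds for Galois $\L$ (Aramata--Brauer) but is not known in general; the paper itself lists this entireness as an extra hypothesis of earlier work. So what you have proved is the lemma under GRH for both $\zeta_\L$ and $\zeta$, not the lemma as stated.

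The paper's proof never touches $\zeta$. It proceeds as follows:
\begin{itemize}
\item It writes $\Sigma_\L(N)=\int_2^N f''(x)\psi^{(1)}_\L(x)\dd x$.
\item It inserts the GRH bound for $\psi^{(1)}_\L$ from \cite[Theorem~1.1]{GrenieMolteni2}, in the form~\eqref{eq:G17}, which needs GRH only for $\zeta_\L$.
\item It evaluates the $x^2/2$ contribution exactly.
\item It identifies the resulting constant as $\gamma+\ln\kL$ via the unconditional Mertens theorem for $\L$.
\item It bounds the tail $\int_N^\infty f''R_\L$ by $4.62\cdot 0.5375(\lDL-\nL+10)/(\sqrt N\ln N)$.
\end{itemize}

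Your route is shorter and reuses the earlier lemmas. It gets the constant $\gamma+\ln\kL$ for free from the residue formula, with no Mertens theorem for $\L$ needed, and it even lands slightly below the stated bound in its $\nL$-dependence. The price is the extra RH assumption. That is harmless for Theorem~\ref{th:G2}, the only place the lemma is used, since GRH for $\Q$ is assumed there. To prove Lemma~\ref{lem:G6} as stated, however, you need an argument that, like the paper's, works directly with $\psi^{(1)}_\L$.
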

%
\begin{proof}
To ease the notation, we write
\begin{equation*}
\Sigma_\L(N) = \sum_{n\leq N} \tilde{\Lambda}_\L(n)\big(f(n) - f(N) - (n{-}N)f'(N)\big)
\end{equation*}
in this proof. Using the Riemann--Stieltjes representation of the sum and integration by parts twice,
we have
\begin{align*}
\Sigma_\L(N)
=  \int_2^N f''(x)\psi^{(1)}_\L(x) \dd x.
\end{align*}
As in Lemma~\ref{lem:G4}, from~\cite[Theorem~1.1]{GrenieMolteni2} we deduce that
$\psi_\L^{(1)}(x)= \frac{x^2}{2} + R_\L(x)$ with
\begin{align*}
|R_\L(x)|
\leq& (x^{3/2} + 2x + 2)\big(0.5375 \lDL - 1.0355\nL + 5.4341\big)
    + (x\ln x + 1)\nL.
\end{align*}
To simplify the computations, we use the inequality
\begin{equation*}
x\ln x + 1 \leq 0.3691(x^{3/2} + 2x + 2)
\qquad 
\forall x\geq 121,
\end{equation*}
%
producing the weaker (but nicer) bound
\begin{align}
|R_\L(x)|
\leq& (x^{3/2} + 2x + 2)\big(0.5375 \lDL - 0.6664\,\nL + 5.4341\big)  \notag\\
\leq& 0.5375 (x^{3/2} + 2x + 2)\big(\lDL - \nL + 10\big),             \label{eq:G17}
\end{align}
%
where in the second inequality we have used that $\nL\geq 2$.
%
Thus,
\begin{align}
\Sigma_\L(N)
&= \int_2^N f''(x)\frac{x^2}{2} \dd x + \int_2^N f''(x)R_\L(x)\dd x.                             \notag\\
\intertext{As we did for Lemma~\ref{lem:G5}, the first integral can be evaluated explicitly and
the second integral converges to a constant by~\eqref{eq:G17}, so that}
\Sigma_\L(N)
&= \ln\ln N + c' - \frac{3/2}{\ln N} - \frac{1/2}{\ln^2 N} - \int_N^{\infty} f''(x)R_\L(x)\dd x  \label{eq:G18}
\end{align}
for a suitable constant $c'$. In particular,
$
\int_2^N \psi_\L^{(1)}(x) f''(x) \dd x
= \ln\ln N + c' + o(1)
$.
Mertens' Theorem for the field $\L$ says that
\begin{equation*}
\prod_{\substack{\p\subseteq \OL \\ \Norm\p \leq x}} \Big(1 - \frac{1}{\Norm\p}\Big)^{-1}
\sim e^\gamma \kL \ln x ,
\end{equation*}
see \cite{GarciaLee2} or \cite{Rosen}. %
Since $\Sigma_\L(N) = - \sum_{\Norm\p\leq N} \ln\big(1 - \frac{1}{\Norm\p}\big) + o_\L(1)$, we discover
that $c' = \gamma + \ln\kL$.
%

Moreover, with two integrations by parts, for all $N\geq 121$ we have
\begin{align*}
\int_N^{\infty} &(x^{3/2}+2x+2) f''(x)\dd x \\
&=    -f'(N)(N^{3/2}+2N+2) + f(N)\Big(\frac{3}{2}\sqrt{N}+2\Big) + \frac{3}{4}\int_N^{\infty}\frac{\dd x}{x^{3/2}\ln x}\\
&\leq -f'(N)(N^{3/2}+2N+2) + f(N)\Big(\frac{3}{2}\sqrt{N}+2\Big) + \frac{3}{2\sqrt{N}\ln N}\\
&=    \frac{1+1/\ln N}{\sqrt{N}\ln N}\Big(1+\frac{2}{\sqrt{N}}+\frac{2}{N^{3/2}}\Big) + \frac{1}{\sqrt{N}\ln N}\Big(3+\frac{2}{\sqrt{N}}\Big)\\
&\leq \frac{4.62}{\sqrt{N}\ln N} .
\end{align*}
%
This evaluation and the bound for $R_\L(x)$ given in \eqref{eq:G17} yield
\begin{equation}\label{eq:G19}
\sqrt{N}\ln N
\Big|\int_N^{+\infty} f''(x)R_\L(x)\dd x\Big|
\leq 4.62 \cdot 0.5375\big(\lDL - \nL + 10\big).
\end{equation}
%
Plugging~\eqref{eq:G19} into~\eqref{eq:G18} we get the claim.
\end{proof}

\section{Proof of Theorem~\ref{th:G1}}\label{sec:G14}
To prove both theorems, we use a similar strategy. For discriminant above a given threshold, the previous
lemmas provide a proof of the statements. Below that threshold, we check directly each statement using
PARI/GP~\cite{PARI2} to compute the residue and compare it to the corresponding bound. We have retrieved
the necessary lists of fields from \cite{MegrezTables}; \cite{LMFDB} is an alternative possible source.

\subsection*{Proof of~(\ref{eq:G8})}
Let $N\geq 121$. Plugging the bounds in Lemmas~\ref{lem:G2}, \ref{lem:G3}, \ref{lem:G4}, and~\ref{lem:G5}
into the formula in Lemma~\ref{lem:G1} we get that
\begin{align*}
\ln\kK
\geq& \ln\zeta(\nK)
     - \frac{4}{\sqrt{N}\ln N}
     - \Big(\frac{3.2 \lDK}{\sqrt{N}\ln N}
            + \frac{4 \nK}{N\ln N}
            + \frac{3.4}{\ln N}\Big)                                                                     \\
    &- \Big(\ln\ln N + \gamma - \frac{3/2}{\ln N}\Big)
     - \Big(\frac{2.5\lDK}{\sqrt{N}\ln N} - \frac{3.5\nK}{\sqrt{N}\ln N} + \frac{23.5}{\sqrt{N}\ln N}\Big).
\end{align*}
The assumptions $N\geq 121$ and $\nK\geq 2$ assure that $\frac{3.5\nK}{\sqrt{N}\ln N} - \frac{4 \nK}{N\ln N}
\geq \frac{6}{\sqrt{N}\ln N}$. With this simplification the claim becomes
\begin{align}\label{eq:G20}
\ln\kK
\geq& \ln\zeta(\nK)
     - \gamma
     - \ln\ln N
     - \frac{5.7 \lDK}{\sqrt{N}\ln N}
     - \frac{1.9}{\ln N}
     - \frac{21.5}{\sqrt{N}\ln N}.
\end{align}
In this formula we chose $N=(e\lDK)^2$ and we estimate the resulting $\ln\ln N$ term with $\ln\llDK + \ln
2 + 1/\llDK$, producing the bound
\begin{align*}
\ln\kK
\geq& \ln\zeta(\nK)
     {-} \gamma
     {-} \ln 2
     {-} \ln\llDK
     {-} \frac{1}{\llDK}
     {-} \frac{1.9 {+} 5.7/e {+} 21.5/(e\lDK)}{2\ln(e\lDK)}.
\end{align*}
The last term is bounded below by $\frac{-2}{\llDK}$.\footnote{To verify this claim, put $a:= (1.9 + 5.7/e)/2$, $b :=
21.5/(2e)$ and $L := \lDK$. Then the last term is $-\frac{a + b/L}{1+\ln L}$ which is $\geq \frac{-2}{\ln
L}$ if and only if $b\frac{\ln L}{L} \leq 2 + (2-a)\ln L$. This is true, since $b\frac{\ln L}{L}\leq b/e
<2$ and $a<2$.} Therefore, 
%
%
\begin{align*}
\ln\kK
\geq& \ln\zeta(\nK)
     - \gamma
     - \ln 2
     - \ln\llDK
     - \frac{3}{\llDK} ,
\end{align*}
which is the claim in~\eqref{eq:G8}. The assumption $N\geq 121$ with $N=(e\lDK)^2$ is satisfied as soon
as $|\DK|\geq 58$,
the remaining cases are proved directly using PARI/GP~\cite{PARI2}.

For the improved bounds~\eqref{eq:G10} we use directly~\eqref{eq:G20} with respectively $N=(7\lDK)^2$
(for $h_0 = 1$), $N=(6\lDK)^2$ (for $h_0 = 100$), and $N=(5\lDK)^2$ (for $h_0 = 2\,000$).
%
%
%

\subsection*{Proof of~(\ref{eq:G9})}
Since $\tilde{\Lambda}_\K(n)\leq \nK\Lambda(n)$ and $f(n) - f(N) - (n-N)f'(N) \geq 0$, from
Lemmas~\ref{lem:G1}, \ref{lem:G2}, and~\ref{lem:G5} we get
\begin{equation*}
\ln\kK
\leq (\nK-1)\Big(\ln\ln N + \gamma - \frac{3/2}{\ln N}\Big)
     + \frac{2.5 \lDK}{\sqrt{N}\ln N} - \frac{3.5 \nK}{\sqrt{N}\ln N} + \frac{23.5}{\sqrt{N}\ln N}.
\end{equation*}
Assuming $|\DK|\geq 60\,000$, setting $N=(\lDK)^2\geq 121$ one gets
%
\begin{align*}
\ln\kK
\leq& (\nK{-}1)\big(\ln\llDK + \ln 2 + \gamma\big)
     + \frac{4 - 3 \nK/2 + \frac{23.5 - 3.5 \nK}{\lDK}}{2\llDK} .
\end{align*}
The second term is always negative when $\nK\geq 4$, but for $\nK=3$ this only occurs when $|\DK|\geq
1.9\cdot 10^{11}$, and never occurs when $\nK=2$.
%
%
Therefore, we estimate that term with something which is not too big when $\nK=2$ or $3$, but that remains
negative for $\nK\geq 4$. Our choices for the parameters give $4 + \frac{23.5 - 3.5 \nK}{\lDK}\leq 5.5$.
%
This proves that
\begin{align*}
\ln\kK
\leq& (\nK-1)\big(\ln\llDK + \ln 2 + \gamma\big) + \frac{11 - 3 \nK}{4\llDK},
\end{align*}
which is the claim in~\eqref{eq:G9} in case $|\DK|\geq 60\,000$. The validity of the claim for
$|\DK|< 60\,000$ is verified directly, using PARI/GP~\cite{PARI2}.

\section{Proof of Theorem~\ref{th:G2}}\label{sec:G15}
We have already noticed that the claim holds true in case $\L=\Q$ or $\L=\K$, hence we can assume that
$\Q \subsetneq \L \subsetneq \K$. Subtracting copies of the formula in Lemma~\ref{lem:G1} for $\K$
and $\L$, we get
\begin{equation*}
\ln(\kK/\kL)
= \sum_{n\leq N} (\tilde{\Lambda}_\K(n) - \tilde{\Lambda}_\L(n))(f(n) - f(N) - (n - N)f'(N)) + \RR_\K(N) - \RR_\L(N) ,
\end{equation*}
where the subscripts $\K$ and $\L$ indicate the field involved.

To estimate the sum we take advantage of the bound $\tilde{\Lambda}_\K \leq \nKL \tilde{\Lambda}_\L$.
%
Hence
\begin{equation*}
\ln(\kK/\kL)
\leq (\nKL-1)\sum_{n\leq N} \tilde{\Lambda}_\L(n)(f(n) - f(N) - (n - N)f'(N)) + \RR_\K(N) - \RR_\L(N).
\end{equation*}
By Lemma~\ref{lem:G6} for the sum and Lemma~\ref{lem:G2} for $\RR_\L$ and $\RR_\K$, we get
\begin{align*}
\ln(\kK/\kL)
\leq& (\nKL - 1)\Big(\ln\ln N {+} \gamma {+} \ln\kL  - \frac{3/2}{\ln N} - \frac{1/2}{\ln^2 N} {+} 2.5\frac{\lDL - \nL {+} 10}{\sqrt{N}\ln N}\Big) \\
    &+ \frac{2.5 \lDL - 3.5 \nL + 23.5}{\sqrt{N}\ln N} + \frac{2.5 \lDK - 3.5 \nK + 23.5}{\sqrt{N}\ln N}.
\end{align*}
Therefore,
\begin{align*}
\ln\kK
\leq&  (\nKL - 1)(\ln\ln N + \gamma) + \nKL\ln\kL
      + \frac{2.5 \lDK - 3.5 \nK + 20}{\sqrt{N}\ln N}      \\
    & - (\nKL - 1)\Big(\frac{3/2}{\ln N} + \frac{1/2}{\ln^2 N}\Big)
      + 2.5\nKL\frac{\lDL - \nL + 10}{\sqrt{N}\ln N}.
\end{align*}
(The constant $20$ comes from noticing that $-2.5(-\nL+10)-3.5\nL+23.5+23.5=-\nL+22\leq 20$ because
$\nL\geq 2$).
Moreover, from the formula of the discriminant in extensions we know that $|\Delta_\L|^{\nKL} \leq
|\Delta_\K|$, so this is
\begin{align}
\ln\kK
\leq&  (\nKL - 1)(\ln\ln N + \gamma) + \nKL\ln\kL
      + \frac{5 \lDK}{\sqrt{N}\ln N}                                    \notag\\
    &\qquad - \frac{\nKL - 1}{\ln N}\Big(\frac{3}{2} + \frac{1/2}{\ln N}\Big)
      - \frac{6 \nK - 25\nKL - 20}{\sqrt{N}\ln N}.                      \label{eq:G21}
\end{align}
The second line is negative for every choice of degrees $\nKL$ and $\nK$ as soon as $\L \neq \K$ and $N$
(and hence $|\DK|$) is large enough, but this threshold varies significantly as a function of $\nKL$ and
$\nK$. For this reason, we estimate it in a weaker but more convenient way.
Since $\nK = \nL \nKL \geq 2\nKL$, we have $6 \nK - 25\nKL - 20 \geq -13(\nKL-1)-33$ and the last line
in~\eqref{eq:G21} is estimated by
\begin{equation*}
 - \frac{\nKL - 1}{\ln N}\Big(\frac{3}{2} + \frac{1/2}{\ln N} - \frac{13}{\sqrt{N}}\Big)
   + \frac{33}{\sqrt{N}\ln N}.
\end{equation*}
Assume that $N\geq 190$. Then the coefficient $\frac{3}{2} + \frac{1/2}{\ln N} - \frac{13}{\sqrt{N}}$ is
$\geq 0.65$
%
and $\frac{33}{\sqrt{N}}\leq 2.4$.
%
With this bound, from~\eqref{eq:G21} we deduce that
\begin{align*}
\ln\kK
\leq&  (\nKL - 1)(\ln\ln N + \gamma) + \nKL\ln\kL
      - 0.65\frac{\nKL-1}{\ln N} + \frac{2.4}{\ln N} + \frac{5\lDK}{\sqrt{N}\ln N}.
\end{align*}
We set $N= (c\lDK)^2$ for a suitable constant $c\geq 1$ that we will choose later. Then, from the bound
$\ln\ln N \leq \ln\llDK + \ln 2 + \frac{\ln c}{\llDK}$ and the lower bound $\ln N \geq 2\llDK$ (for the
denominators), we deduce that
\begin{align*}
\ln\kK
\leq&  (\nKL - 1)(\ln\llDK + \ln 2 + \gamma) + \nKL\ln\kL
      + \frac{W(\nKL-1,c)}{\llDK}
\end{align*}
with $W(r,c)= r\ln c + 5/(2c) + (2.4-0.65\,r)/2$.
The minimum for $W(r,c)$ is attained when $c=5/(2r)$, but this choice is compatible with the assumption
$c\geq 1$ only in the first two cases $\nKL=2$, $3$, and also in those cases the condition
$N=(c\lDK)^2\geq 190$ is difficult to meet with those values, so that our choices are: $c=5$ for $\nKL=2$
(i.e., $r=1$), $c=1.3$ for $\nKL=3$ (i.e., $r=2$) and $c=1$ for $\nKL\geq 4$ (i.e., $r\geq 3$). These
choices are suboptimal but still able to produce the upper bound $W\leq 3$.
%
As a consequence, we have proved that
\begin{equation*}
\kK
\leq \kL^{\nKL}(2 e^{\gamma}\llDK)^{\nKL - 1} e^{\frac{3}{\llDK}}
\end{equation*}
when $\Delta_\K$ is such that $(5\lDK)^2\geq 190$ (if $\nKL=2$), $(1.3\lDK)^2\geq 190$ (if $\nKL=3$)
and $(\lDK)^2\geq 190$ (if $\nKL\geq 4$).
\par\smallskip \noindent %
We have $(5\lDK)^2\geq 190$ for $|\DK|\geq 16$;
%
since all fields of degree at least $4$ have $|\DK|\geq 117$, the case for $\nKL=2$ is settled.
\par\smallskip \noindent %
We have $(1.3\lDK)^2\geq 190$ for $|\DK|\geq 40261$;
%
since all fields of degree at least $9$ have $|\DK|\geq 10^8$ (by Odlyzko's bounds \cite{Odlyzko1,
OdlyzkoTables}, Table 4 entry $b=2.3$ for the discriminant),
%
the case for $\nKL=3$ is settled, except for the sextic fields with $|\DK|\leq 40\,260$ and a (necessarily
unique) quadratic subfield. We perform the check on those fields directly, using \cite{MegrezTables} to
generate the necessary list of sextic fields (64 of them with $\DK\leq 40\,260$) and PARI/GP instruction
{\tt nfsubfields} to generate the possible quadratic subfields (only 25 of them have a quadratic subfield).
\par\smallskip \noindent  %
We have $(\lDK)^2\geq 190$ for $|\DK|\geq 10^6$, and
since all fields of degree at least $8$ have $|\DK|\geq 10^6$ (again by Odlyzko's Table 4, entry
$b=2.1$), also the last case $\nKL\geq 4$ is settled.
%


\end{document}